\theoremstyle{plain}
\newtheorem{theorem}{Theorem}
\newtheorem{corollary}{Corollary}
\newtheorem{proposition}{Proposition}
\newtheorem{conjecture}{Conjecture}
\newtheorem{lemma}{Lemma}
\newtheorem{Definition}{Definition}
\newtheorem*{theorem*}{Theorem}
\newtheorem*{corollary*}{Corollary}
\theoremstyle{definition}
\newtheorem{remark}{Remark}
\newcounter{HConditions}
\newcommand{\Sa}{\overline{\mathcal{S}}}
\newcommand{\vh}{v_{\makebox[0.3pt][l]{\hexstar}\hexagon}}
\newcommand{\oG}{\overline{G}}
\begin{document}

\title{$K_{1,2,2,2}$ has no $n$-fold planar cover graph for $n<14$}

\author{Dickson Y. B. Annor}


\thanks{Department of Mathematical and Physical Sciences, La Trobe University, Bendigo 3552 and Bundoora 3086, Australia}

\author{Yuri Nikolayevsky}

\author{Michael S. Payne}


\subjclass[2020]{Primary: 05C10; Secondary: 57M15} 
\keywords{Planar cover, projective plane, Negami's conjecture}

\thanks{Corresponding author's email: d.annor@latrobe.edu.au}

\thanks{Dickson Annor was supported by a La Trobe Graduate Research Scholarship.}
\thanks{Michael Payne was partially supported by a
DECRA from the Australian Research Council.}

\begin{abstract}%
 S. Negami conjectured in $1988$ that a connected graph has a finite planar cover if and only if it embeds in the projective plane. It follows from the works of D. Archdeacon, M. Fellows, P. Hlin\v{e}n\'{y}, and S. Negami that this conjecture is true if the graph $K_{1, 2, 2, 2}$ has no finite planar cover. We prove a number of structural results about putative finite planar covers of $K_{1,2,2,2}$ that may be of independent interest.  We then apply these results to prove that $K_{1, 2, 2, 2}$ has no planar cover of fold number less than $14$.
\end{abstract}

\maketitle

\section{Introduction}\label{sec:intr}
In this paper, we deal with finite simple undirected graphs. For a graph $K$, let $V(K)$ and $E(K)$ respectively denote the vertex set and edge set of $K$. A graph $G$ is a \textit{cover} of a graph $K$ if there exists an onto mapping $\pi : V(G) \to V(K)$, called a (cover) \textit{projection}, such that $\pi$ maps the neighbours of any vertex $v$ in $G$ bijectively onto the neighbours of $\pi(v)$ in $K$. A cover is called \textit{planar} if it is a planar graph. Note that every planar graph has a planar cover by the identity projection, but there are also non-planar graphs having planar covers.  If $K$ is connected, then $|\pi^{-1}(v)| = n$ is the same for all $v \in V(K)$ and $\pi$ is called an $n$-fold cover.
 
In $1988$  S. Negami made the following conjecture.
 \begin{conjecture}[Negami's Conjecture~\cite{8}]\label{conj: negami} 
      A connected graph has a finite planar cover if and only if it embeds in the projective plane.
 \end{conjecture}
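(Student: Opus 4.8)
The plan is to prove the two implications of the biconditional separately, using the literature to reduce the hard direction to a single graph. For the direction ``embeds in the projective plane $\Rightarrow$ has a finite planar cover,'' I would use the orientation double cover: the sphere $S^2$ is a connected $2$-fold cover of the projective plane, and any embedding of a connected graph $K$ in the projective plane lifts through this covering to an embedding of a $2$-fold cover $\widetilde{K}$ in $S^2$. Since the graphs embeddable in $S^2$ are exactly the planar ones, $\widetilde{K}$ is a planar double cover of $K$. This direction is routine and needs no graph theory beyond the covering-space construction.

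For the converse, ``finite planar cover $\Rightarrow$ projective-planar,'' the key observation is that the class of graphs admitting a finite planar cover is closed under taking minors, as is the class of projective-planar graphs; the latter is characterised by a known finite list of $35$ minor-minimal obstructions. It therefore suffices to show that each of these $35$ obstructions fails to have a finite planar cover: if so, then a graph with a finite planar cover can contain none of them as a minor and is thus projective-planar, which together with the easy direction yields equality of the two minor-closed classes. Invoking the works of Archdeacon, Fellows, Hliněn\'y and Negami, every obstruction except $K_{1,2,2,2}$ is already known to have no finite planar cover, so the whole conjecture collapses to the single statement that $H := K_{1,2,2,2}$ has no finite planar cover for any fold number $n$.

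The heart of the argument is then the analysis of a putative planar $n$-fold cover $G$ of $H$. First I would record the combinatorial skeleton: $H$ is the cone over the octahedron $K_{2,2,2}$, with one apex of degree $6$ and six rim vertices of degree $5$, so $|V(H)| = 7$ and $|E(H)| = 18$. An $n$-fold cover then has $7n$ vertices and $18n$ edges, and Euler's formula gives $|F| = 2 + 11n$, forcing the average face degree to tend to $36/11$, so $G$ is ``almost a triangulation.'' The covering condition constrains the link of every apex vertex to map onto a closed walk through the octahedron $K_{2,2,2}$, and I would develop the structural results promised in the abstract: the forced cyclic orderings of neighbours around apex and rim vertices, the way fibres meet individual faces, and the resulting restrictions on the finitely many admissible local ``types'' of vertex neighbourhood. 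With these in hand the goal is a discharging argument: assign Euler charge to vertices and faces, redistribute it according to the forced local configurations, and show that the total charge, which must equal the positive Euler characteristic of the sphere, is driven to a contradiction \emph{uniformly} in $n$.

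The main obstacle is precisely this uniformity in the fold number. The local structural constraints pin the configuration down completely only when $n$ is small, and the present paper exploits this to settle $n < 14$ by a finite, if intricate, case analysis; as $n$ grows the number of admissible local types and the ways in which fibres can be glued together proliferate, and a naive case analysis no longer terminates. Overcoming this requires either a discharging scheme whose rules and bounds are manifestly independent of $n$, or a minimality reduction showing that a smallest counterexample cannot contain certain reducible sub-configurations while every sufficiently large cover must contain one --- an unavoidability-plus-reducibility dichotomy in the spirit of discharging proofs. Establishing such an $n$-free mechanism is the crux, and it is exactly the step left open beyond the fold-number bound obtained here.
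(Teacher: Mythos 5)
The statement you are asked to prove is Conjecture~\ref{conj: negami} itself, which remains open; the paper does not prove it and does not claim to, so any complete ``proof'' would have to contain genuinely new mathematics. Your two reductions are correct and standard: the easy direction via the sphere as the orientation double cover of the projective plane is exactly the argument the paper gives in the introduction, and the reduction of the hard direction to the single statement that $K_{1,2,2,2}$ has no finite planar cover is the known consequence of the results of Archdeacon, Fellows, Hlin\v{e}n\'{y} and Negami (with the small caveat that the remaining obstructions are not just $K_{1,2,2,2}$ itself but also the excluded minors that reduce to it via $Y\Delta$-transformations, which preserve the existence of planar covers --- the paper notes this parenthetically). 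Up to this point you have faithfully reproduced the accepted framework.

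The genuine gap is the core step, and you concede it yourself: you never prove that $K_{1,2,2,2}$ has no finite planar cover, only sketch a discharging strategy and then correctly observe that no discharging scheme uniform in the fold number $n$ is known. A sketch that ends with ``establishing such an $n$-free mechanism is the crux, and it is exactly the step left open'' is an honest research programme, not a proof, so the proposal cannot be accepted as a proof of the conjecture. It is also worth noting that your proposed mechanism for the partial results diverges from the paper's: the fold-number bound $n \ge 14$ proved here is \emph{not} obtained by discharging (that was Takahashi's method, which reached only $n \ge 12$), but by a structural analysis in the spirit of Archdeacon --- choosing a cover extremal with respect to conditions~\eqref{it:minfold}--\eqref{it:longf}, extracting a semi-cover $G'$ and a subgraph $H$ covering $K_4$ with the properties of Lemma~\ref{l:Hfaces}, and then ruling out small configurations via the bead/necklace machinery (Corollary~\ref{c:nonecklace}, Propositions~\ref{p:twofaces} and~\ref{p:share}) together with the counting in Section~\ref{sec:proofthm}. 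If you pursue the uniformity problem, the paper's Conjecture~\ref{conj:apn} (nonexistence of any semi-cover satisfying Lemma~\ref{l:Hfaces}, with no fold-number restriction) is the precise $n$-free statement your sketch is groping towards, and would be the natural target rather than a global discharging of $G$.
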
 
 If a graph embeds in the projective plane, then the lift of this embedding to the sphere is a planar $2$-fold cover. Thus every projective-planar graph has a planar cover. The converse is not true for disconnected graphs. For example, the graph consisting of two disjoint copies of $K_5$ has a planar cover, but does not embed in the projective plane.
 
 In the years after Conjecture~\ref{conj: negami} was posed, the results of  D. Archdeacon \cite{1}, M. Fellows \cite{4}, P. Hlin\v{e}n\'{y} \cite{5} and \cite{7}, and S. Negami \cite{10}  combined to show that it is equivalent to the following statement \textit{`$K_{1, 2, 2, 2}$ has no finite planar cover'}. The graph $K_{1, 2, 2, 2}$ consists of the octahedron with an apex vertex connected to all other vertices, see Figure~\ref{fig:K2221}. Furthermore, Archdeacon and Richter~\cite{3} showed that for any planar cover of a non-planar graph, the fold number is even. It is also known that $K_{1, 2, 2, 2}$ has no planar $2$-fold cover \cite{9}. For other related partial results on Negami's conjecture, we refer interested readers to \cite{6}.

\begin{figure}[ht]
    \centering
    \begin{tikzpicture}[scale=0.375]
 \draw[ thick ] (0,0)--(8,0); 
 \draw[ thick ] (0,0)--(4,8); 
 \draw[ thick ] (8,0)--(4,8); 
 \draw[ thick ] (2,4)--(6,4); 
 \draw[ thick ] (4,0)--(6,4); 
 \draw[ thick ] (4,0)--(2,4); 
 \draw[ thick ] (6,4)--(8,8); 
 \draw[ thick ] (2,4)--(8,8); 
 \draw[ thick ] (0,0) .. controls (-2,8) and (2, 11) .. (8, 8);
 \draw[ thick ] (0,0) .. controls (0,5) and (2, 6) .. (4, 8);
 \draw[ thick ] (8,0) .. controls (8,5) and (6, 6) .. (4, 8);
 \draw[ thick ] (0,0) .. controls (2,-1.5) and (5, -1.5) .. (8, 0);
 \draw[ thick ] (4,0) .. controls (5, 0.5) and (7.5, 1) .. (8, 8);
 \draw[ thick ] (8,0) .. controls (9.5,2) and (9.5, 5)  .. (8, 8);
 \draw[ thick ] (4,8) .. controls (6,8.5) .. (8, 8);
 \filldraw[black] (0,0) circle (6pt);
 \filldraw[black] (4,0) circle (6pt);
 \filldraw[black] (8,0) circle (6pt);
 \filldraw[black] (2,4) circle (6pt);
 \filldraw[black] (8,8) circle (6pt);
 \filldraw[black] (4,8) circle (6pt);
 \filldraw[black] (6,4) circle (6pt);
    \end{tikzpicture}
    \caption{The graph $K_{1,2, 2, 2}$.}
    \label{fig:K2221}
\end{figure}
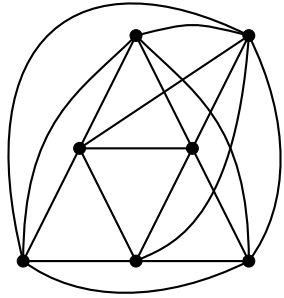

The purpose of this paper is to study the structure of planar covers of $K_{1, 2, 2, 2}$ in the hope that we might make useful progress towards a proof of Conjecture~\ref{conj: negami}. As an application of our results we prove the following. 
\begin{theorem}\label{th:nofold}
No $4, 6, 8, 10$ or $12$-fold cover of the graph $K_{1, 2, 2, 2}$ is planar.
\end{theorem}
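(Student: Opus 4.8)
The plan is to argue by contradiction and to reduce immediately to the connected case. If $G$ is a planar cover of $K_{1,2,2,2}$ of fold number $n$, then since $K_{1,2,2,2}$ is connected every component of $G$ is itself a connected planar cover, of fold number $n_i$ with $\sum_i n_i = n$; moreover $K_{1,2,2,2}$ is nonplanar (it has $18 > 3\cdot 7 - 6$ edges), so by the Archdeacon--Richter theorem \cite{3} each $n_i$ is even. Thus a disconnected fold-$n$ cover has a proper component that is a connected planar cover of smaller even fold number, which, together with the nonexistence of a planar double cover \cite{9} as the base case, lets me treat $n = 4,6,8,10,12$ in turn and conclude that at each stage it suffices to exclude \emph{connected} covers. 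So I fix a connected $G$ with a chosen plane embedding. Recording the fibres $\pi^{-1}(v)$ as vertex classes and, for each edge of $K_{1,2,2,2}$, the perfect matching it induces between the two fibres, presents $G$ as a permutation (voltage) cover: the $n$ apex lifts have degree $6$ and the $6n$ octahedron lifts have degree $5$, so $|V(G)| = 7n$, $|E(G)| = 18n$, and by Euler's formula $G$ has exactly $11n + 2$ faces.

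Next I would exploit the structural results proved above. The standard discharging assignment ($\deg(v)-6$ on vertices, $2\deg(f)-6$ on faces, total $-12$) shows the apex lifts carry zero charge, the octahedron lifts carry total charge $-6n$, and hence the faces of degree at least four carry total charge exactly $6n-12$; in particular $G$ is "almost a triangulation", with only a controlled, linear-in-$n$ amount of non-triangular face. I would then feed the embedding into the structure theorems to pin down the local picture around each apex lift: its six neighbours are one lift of each octahedron vertex, the three antipodal (same-part) pairs are nonadjacent, and planarity together with the octahedral link $K_{2,2,2}$ forces the cyclic order of these neighbours and the incident faces into one of a short list of admissible patterns. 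Propagating these patterns along the fibre matchings, organised via the distinguished objects $\Sn$, $\Sa$ and $\vh$ introduced above, should yield a rigid global skeleton and a counting relation tying the number of admissible local configurations to $n$.

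With this skeleton in hand, the endgame is a finite case analysis for each of $n = 4,6,8,10,12$. For each fixed $n$ the structural constraints should leave only finitely many ways to glue the local patterns consistently around the sphere, and I would eliminate every surviving candidate by showing it either violates the face count $11n+2$, contains a $K_5$- or $K_{3,3}$-subdivision (hence is nonplanar), or fails to be a genuine cover. I expect the smallest cases to be dispatched by hand or a short enumeration, while $n = 10, 12$ likely require a systematic, possibly computer-assisted, search guided by the structural lemmas.

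The main obstacle is precisely the completeness and tractability of this final analysis: the quantitative relations coming from the structure theorems must be strong enough that, for each $n \le 12$, only a manageable and provably exhaustive list of global configurations survives. I therefore expect the bulk of the difficulty — and the reason the method halts at $n < 14$ — to lie in showing that the admissible local patterns cannot be assembled into any planar cover for these small $n$, with the value $14$ marking the threshold at which the governing counting inequalities cease to force a contradiction.
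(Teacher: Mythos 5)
There is a genuine gap: your text is a plan, not a proof, and the plan is missing precisely the ideas that make the paper's argument work. After the (correct but routine) reduction to the connected case and the Euler/discharging bookkeeping, every load-bearing step is conditional: the local patterns around apex lifts ``should'' be forced into a short list, propagation ``should yield a rigid global skeleton'', the gluings for each $n\le 12$ ``should'' be finitely enumerable, and the cases $n=10,12$ ``likely require a computer-assisted search''. None of these claims is established, and no mechanism is given that would establish them; discharging alone tells you only that the non-triangular face charge is $6n-12$, which does not bound the number of global configurations at all. You also invoke ``the distinguished objects $\Sn$, $\Sa$ and $\vh$ introduced above'', but no such objects are introduced or used anywhere in the paper's argument, so the propagation step rests on structure that does not exist.

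What is actually needed, and what the paper supplies, is a localization-plus-doubling argument that your proposal never touches. The paper fixes a minimal long cycle $C$ covering an octahedral $3$-cycle, extracts inside it a semi-cover $G'$ containing a connected cover $H$ of the $K_4$ on $\{0,-1,-2,-3\}$ (Lemma~\ref{l:Hfaces}), and observes that $G$ must contain \emph{two disjoint} such semi-covers (one inside $C$, one outside), so that a lower bound $h$ on the fold of $H$ forces fold$(G)\ge 2h$; Lemma~\ref{lem:longcycles} sharpens this by counting the total length of long octahedral $3$-cycles against the triangles trapped inside $H$. The hard content is then the lower bound $h\ge 6$ (with $t\ge 3$ trapped triangles), proved by contracting $H$ to a cubic bipartite planar graph $H''$, applying Euler's formula in the form $2f_2+f_4=6+f_8+2f_{10}+\cdots$, and running a bead-counting case analysis that leans on Corollary~\ref{c:nonecklace}\eqref{it:noneckl}, Proposition~\ref{p:twofaces} and Proposition~\ref{p:share}. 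This is a finite, human-checkable analysis precisely because it concerns the small graph $H''$, not the whole cover $G$; your proposed enumeration of all planar covers for each $n\le 12$ has no such finiteness guarantee, which is exactly the obstacle you concede in your final paragraph. As written, the proposal therefore does not prove the theorem, nor does it reduce it to verifiable steps.
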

Combined with the earlier results mentioned above this implies: 
\begin{corollary} 
If a connected graph a has planar cover of fold number less than 14, then it embeds in the projective plane.
\end{corollary}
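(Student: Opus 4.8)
The plan is to argue by contradiction, reducing the general statement to the single graph $K_{1,2,2,2}$ by means of the minor-theoretic machinery summarised in the introduction. Suppose $G$ is a connected graph that admits a planar cover of fold number $n<14$ but does not embed in the projective plane. Since $G$ is not projective-planar, it contains at least one minor-minimal non-projective-planar graph as a minor. First I would invoke the combined results of Archdeacon, Fellows, Hlin\v{e}n\'{y} and Negami: every minor-minimal obstruction other than $K_{1,2,2,2}$ is already known to have no finite planar cover. On the other hand, the property of having a planar cover is preserved under taking minors, so every minor of $G$ inherits a planar cover; hence no obstruction lacking a planar cover can occur as a minor of $G$. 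Consequently the obstruction realised inside $G$ must be $K_{1,2,2,2}$, that is, $G$ contains $K_{1,2,2,2}$ as a minor.

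Next I would track the fold number through the minor operations. Let $\pi\colon\widetilde{G}\to G$ be the given planar $n$-fold cover, and realise $K_{1,2,2,2}$ as a minor of $G$ through a sequence of vertex deletions, edge deletions and edge contractions. Each such operation on the base lifts to the cover: deletions simply remove the corresponding fibres, while an edge contraction in $G$ is mirrored by contracting the perfect matching of $n$ edges lying over it in $\widetilde{G}$. These operations preserve planarity and keep every fibre of size $n$, so they produce a planar cover of $K_{1,2,2,2}$ in which each vertex has exactly $n$ preimages. This cover may be disconnected; since $K_{1,2,2,2}$ is connected, its connected components are themselves covers, with fold numbers summing to $n$. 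Choosing one component yields a connected planar cover of $K_{1,2,2,2}$ of some fold number $m\le n<14$.

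Finally I would eliminate every possible value of $m$. Because $K_{1,2,2,2}$ is non-planar, the theorem of Archdeacon and Richter~\cite{3} forces $m$ to be even, so $m\in\{2,4,6,8,10,12\}$. The case $m=2$ is excluded by the known absence of a planar $2$-fold cover~\cite{9}, and the cases $m\in\{4,6,8,10,12\}$ are precisely those ruled out by Theorem~\ref{th:nofold}. This contradiction shows that no such $G$ exists, which establishes the corollary.

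The step I expect to be the main obstacle is the second paragraph: making the fold-number tracking precise. The bare equivalence quoted in the introduction concerns only the existence of a planar cover, so one must re-examine the minor-monotonicity argument to confirm that it carries the fold number along and never increases it. The delicate points are handling the multi-edges and loops that edge contraction can create while remaining within the simple-graph notion of a cover, and verifying that passing to a connected component of a disconnected cover cannot inflate the fold number beyond $n$.
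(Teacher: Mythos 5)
Your overall route is the one the paper intends: reduce to $K_{1,2,2,2}$ through the excluded minors for the projective plane, carry the fold number through that reduction, and finish with the parity theorem of Archdeacon and Richter~\cite{3}, the $2$-fold result~\cite{9}, and Theorem~\ref{th:nofold}. The contraction subtlety you flag at the end is real but standard to repair: contracting the matching $\pi^{-1}(e)$ gives a cover of the multigraph $G/e$ in the half-edge sense (the projection is still a bijection on the darts at each contracted vertex), and then deleting \emph{all} lifts of each redundant parallel edge restores a simple cover of the same fold number; passing to a connected component can only decrease the fold. So that part of your plan is sound.

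The genuine gap is the claim that ``every minor-minimal obstruction other than $K_{1,2,2,2}$ is already known to have no finite planar cover'', on which your conclusion ``the obstruction realised inside $G$ must be $K_{1,2,2,2}$'' rests. This claim is false. Among the $35$ minor-minimal non-projective-planar graphs there are disconnected ones, for instance the disjoint union of two copies of $K_5$, and these \emph{do} have planar covers --- the paper's own introduction exhibits precisely this example. Minor-monotonicity of ``has a planar cover'' therefore cannot eliminate them, and your argument yields no contradiction in the case where the only obstructions occurring in $G$ are disconnected; excluding that case requires a result of a different shape, namely Negami's theorem (see~\cite{10}, or the survey~\cite{6}) that a \emph{connected} graph containing two disjoint non-planar subgraphs has no finite planar cover, applied to $G$ itself. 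Separately, the three connected obstructions that reduce to $K_{1,2,2,2}$ by $Y\Delta$-transformations are also \emph{not} known unconditionally to lack planar covers; what is known (and what the paper's remark following the corollary alludes to) is the reduction: performing the $Y\Delta$-transformation simultaneously at every lift of the relevant degree-$3$ vertex preserves both planarity and fold number, so a planar $m$-fold cover of any of these three graphs yields a planar $m$-fold cover of $K_{1,2,2,2}$. With these two repairs --- Negami's disjoint-nonplanar-subgraphs theorem for the disconnected obstructions, and the fold-preserving $Y\Delta$ reduction for the relatives of $K_{1,2,2,2}$ --- your argument closes and then coincides with the proof the paper leaves implicit.
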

Moreover, since any possible counterexample to Conjecture~\ref{conj: negami} would contain $K_{1,2,2,2}$ (or another excluded minor for projective planar graphs that reduces to $K_{1,2,2,2}$ via $Y\Delta$-transformations, see~\cite{6}) as a minor, its planar cover graph would have fold number at least 14 and
at least 98 vertices.

We note that N. Takahashi \cite{12}  worked on finite planar covers of $K_{1,2,2, 2}$ in  his Master's Thesis  supervised by K. Ota. He  proved that $K_{1,2, 2, 2}$ has no planar cover of fold number less than $12$. His method of proof was a form of discharging argument. In contrast, our approach is inspired by the `structural' methods employed by Archdeacon~\cite{1}.

The paper is organised as follows.
In Section~\ref{sec:prel}, we state some necessary definitions and known lemmas.
In Section~\ref{sec:cover} we prove an important lemma, that if Conjecture~\ref{conj: negami} fails there must be a planar cover $G$ of $K_{1,2,2,2}$ in which triangular facial cycles are the only facial cycles that cover triangles. 
In Section~\ref{sec:pptH}, we select a particular subgraph $G'$ of $G$, and a cover $H$ of $K_4$ inside $G'$, then go on to establish a number of useful properties of $G'$ and $H$.
In Section~\ref{s:bsn} we show that $H$ cannot have just one internal non-triangular face and in Section~\ref{s:2f} we show that $H$ cannot have just two internal non-triangular faces.
Up to this point, the results presented do not depend on the fold number of the cover $G$. In Section~\ref{sec:proofthm} we turn our attention to covers of low fold number and prove Theorem~\ref{th:nofold}.

\section{Preliminaries}\label{sec:prel}

Let graph $G$ be a cover of graph $K$. For any subgraph $K'$ of $K$, we call the graph $G' = \pi^{-1}(K')$ the \emph{lift of} $K'$ \emph{into G}. The lift of a cycle $C$ of $K$ into $G$ is a collection of disjoint cycles $C'_1, C'_2, \dots, C'_s$. Note that the length of $C$ divides the length of $C'_i$, for $i = 1, 2, \dots, s$. A cycle $C'$ in $G$ is called \textit{short} if its length is equal to the length of $C$. Otherwise, it is called \textit{long}. A cycle $C$ is \textit{peripheral} in a graph $K$ if it is chordless and $K \setminus C$ is connected.

\begin{lemma}\label{lem:shortcycle}(Archdeacon~\cite{1})
Let $G$ be a plane cover of a graph $K$. Let $C'$ be a short cycle of $G$ covering a peripheral cycle $C$ in $K$. Then $C'$ is a face boundary.
\end{lemma}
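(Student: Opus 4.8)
The plan is to use the Jordan curve theorem together with the chordlessness inherited from $C$ and the connectivity of $K \setminus C$. Since $G$ is a plane graph and $C'$ is a cycle in it, $C'$ is a simple closed curve and so separates the plane into an interior region $\mathrm{Int}$ and an exterior region $\mathrm{Ext}$. To prove that $C'$ is a face boundary it suffices to show that one of these two open regions contains no vertex of $G$. First I would record that $C'$ is chordless: because $C'$ is short, $\pi$ restricts to an isomorphism from $C'$ onto $C$, so any chord of $C'$ would project to a chord of $C$, contradicting that the peripheral cycle $C$ is chordless. Consequently an open region of $C'$ that contains no vertex of $G$ contains no edge of $G$ either (an edge with both ends on $C'$ would be a chord, and cycle edges lie on the boundary), and is therefore a single face; this makes the reduction in the previous sentence legitimate.

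Assume for contradiction that both $\mathrm{Int}$ and $\mathrm{Ext}$ contain vertices of $G$. The basic topological observation I would use repeatedly is that any connected subgraph of $G$ avoiding the vertex set $V(C')$ lies entirely in $\mathrm{Int}$ or entirely in $\mathrm{Ext}$: a path realising a crossing from one region to the other would have to meet the Jordan curve $C'$, and in a plane graph this can only happen at a vertex of $C'$, which such a subgraph does not contain. I would then bring in peripherality. Every vertex of $G$ off $C'$ lies in $\pi^{-1}(V(K)\setminus V(C))$ or on some other component of the lift of $C$. Since $K \setminus C$ is connected, each component of $\pi^{-1}(K\setminus C)$ is itself a cover of $K \setminus C$ and hence, by the previous observation, lives on a single side of $C'$. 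The covering condition at a vertex $v' \in C'$ pins down its neighbourhood exactly: $\pi$ maps the neighbours of $v'$ bijectively onto those of $\pi(v')$, two of which are the cycle-neighbours of $\pi(v')$ on $C$ and lift to the two $C'$-neighbours of $v'$, while the remaining neighbours of $v'$ cover neighbours of $\pi(v')$ outside $C$ and hence lie in $\pi^{-1}(V(K)\setminus V(C))$.

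Combining these ingredients, the aim is to show that in fact all vertices of $G$ off $C'$ must lie on a single side, which forces the other side to be an empty face and contradicts the assumption. The heart of the argument, and the step I expect to be the main obstacle, is to propagate side-information across $C'$ correctly: a vertex $a \in \pi^{-1}(V(K) \setminus V(C))$ adjacent to $C'$ carries with it all of its other neighbours (covering the remaining neighbours of $\pi(a)$), and because none of those incident edges may cross $C'$, the entire component of $\pi^{-1}(K \setminus C)$ through $a$ is confined to the side on which $a$ sits; the connectivity of $K\setminus C$ then threatens to place mutually reachable material on both sides at once. The delicate point is the interaction with the other components $C'_j$ of the lift of $C$, since a neighbour of $a$ covering a vertex of $C$ need not lie on $C'$ itself, so one must track how the bridges of $C'$ attach and rule out a bridge inside and a bridge outside simultaneously. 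Once every bridge of $C'$ is shown to lie on one side, one side of $C'$ is free of vertices and edges of $G$, so $C'$ bounds a face, completing the proof.
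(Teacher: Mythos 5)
The paper offers no proof of this lemma at all --- it is quoted verbatim from Archdeacon~\cite{1} --- so your argument has to stand on its own, and it does not: everything you actually establish is preliminary, and the decisive step is announced rather than proved. Your preliminaries are correct --- $C'$ is a Jordan curve; shortness makes $\pi|_{C'}$ an isomorphism onto $C$, so $C'$ inherits chordlessness; a connected subgraph of $G$ avoiding $V(C')$ is confined to one side; and every neighbour of a vertex of $C'$ other than its two cycle-neighbours projects into $V(K)\setminus V(C)$. But the claim on which the whole proof turns --- that all bridges of $C'$ lie on the same side --- is essentially the lemma itself in disguise, and at exactly this point you write that it is ``the main obstacle'' and then continue with ``Once every bridge of $C'$ is shown to lie on one side\dots''. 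Nothing you established forces this. In particular, the suggested source of a contradiction is illusory: $\pi^{-1}(K\setminus C)$ is in general \emph{disconnected} (each of its components separately covers the connected graph $K\setminus C$), so the connectivity of $K\setminus C$ constrains each component individually but says nothing about their joint placement --- two distinct components, together with other lift components $C'_j$ of $C$, can perfectly well sit on opposite sides of $C'$ without any edge attempting to cross it. There is no ``mutually reachable material on both sides'' to exploit, and ruling out this split configuration is precisely the content of the lemma.

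The gap is not cosmetic, because the statement is actually false without the connectivity of $G$, a hypothesis your argument never invokes: take a $2$-fold planar cover of $K_4$ consisting of two disjoint copies, and draw the second copy inside a triangular face of the first. The boundary of that face is a short cycle covering a peripheral cycle of $K_4$, every local condition you verified holds at each of its vertices, and yet it is not a face boundary. (In the present paper this hypothesis is available, since the cover in play is $3$-connected by Lemma~\ref{lem:3-cont}, but a proof must use it.) This shows that no purely local propagation of ``side-information'' across $C'$ can close the argument; any correct completion must bring in global structure --- connectivity of $G$ combined with, for instance, an innermost choice among the lifts of $C$ or a count of how the components of $\pi^{-1}(K\setminus C)$ and the remaining lift cycles $C'_j$ attach on each side, as in Archdeacon's original argument. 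As it stands, your proposal is a sound reduction of the lemma to its hardest step, together with an incorrect guess about why that step should follow, rather than a proof.
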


Let $G$ be a plane graph, and let $f$ be the outer face of $G$. The graph $G$ is a \textit{semi-cover} of a graph $K$ if there exists an onto mapping $\rho : V(G) \to V(K)$, called a \textit{semi-projection}, such that for each vertex $v$ of $G$ not incident with $f$, $\rho$ maps the neighbours of $v$ bijectively onto the neighbours of $\rho(v)$ in $K$, and for each vertex $w$ of $G$ incident with $f$, $\rho$ maps the neighbours of $w$ injectively to the neighbours of $\rho(w)$. It is clear that every cover is a semi-cover but the converse is not true. The notion of semi-cover was first introduced by Hlin\v{e}n\'{y} \cite{7}.

We will need the following fact about planar covers of $K$ with minimum fold number \cite[Corollary~10]{11}. 
 
\begin{lemma}\label{lem:3-cont}
Suppose graph $G$ is a minimum fold planar cover of graph $K$. Then if $K$ is $3$-connected, then $G$ is $3$-connected.
\end{lemma}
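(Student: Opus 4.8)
The plan is to prove that a minimum fold planar cover $G$ of a $3$-connected graph $K$ must itself be $3$-connected. Since $K$ is $3$-connected and the cover projection $\pi$ is a local isomorphism on neighbourhoods, $G$ inherits the minimum degree of $K$, which is at least $3$; in particular $G$ has no vertices of degree $\le 2$, so any separation of $G$ must be cut out by a vertex cut rather than by low-degree defects. The goal is therefore to rule out the existence of a $1$-cut or a $2$-cut in $G$.

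\medskip

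First I would handle connectivity: if $K$ is connected then so is $G$, since $\pi$ is an $n$-fold cover and any two fibres are joined by lifts of edges of $K$. Next I would argue by contradiction, supposing that $G$ has a cutset $S$ with $|S| \le 2$, and examine the components of $G \setminus S$. The key structural idea is that a small cut in the cover should project to a small cut in $K$, contradicting $3$-connectivity of $K$; but because different lifts of the same vertex of $K$ may lie on either side of $S$, one cannot directly push $S$ down to $K$. The natural remedy is to use the freedom afforded by \emph{minimality} of the fold number. From a disconnecting structure in $G$ one builds a new cover of $K$ with strictly smaller fold number by recombining the pieces separated by $S$ across their fibres, exploiting the fact that each component of $G \setminus S$ already maps onto $V(K)$ (or can be completed to do so). This recombination yields a planar cover of fewer folds, contradicting the assumed minimality of $G$.

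\medskip

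Concretely, for a $2$-cut $S = \{a, b\}$ separating $G$ into parts $A$ and $B$, I would look at the projections $\pi(a)$ and $\pi(b)$ in $K$. If $\pi(a) = \pi(b)$, then the single vertex $\pi(a)$ together with the edge structure already forces a contradiction with the $3$-connectivity of $K$, since the neighbourhoods of $a$ and $b$ are split between $A$ and $B$ yet each covers all of $N(\pi(a))$. If $\pi(a) \neq \pi(b)$, then $\{\pi(a), \pi(b)\}$ is a candidate $2$-cut of $K$; one verifies that some component of $K \setminus \{\pi(a), \pi(b)\}$ is missing, again contradicting $3$-connectivity. In each case the apparent obstruction—that vertices on opposite sides of $S$ may share a fibre—is precisely what lets us cut and reassemble $G$ into a planar cover of smaller fold number, which is impossible by the choice of $G$. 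The argument for a $1$-cut is the same but simpler.

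\medskip

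The hard part will be making the recombination construction precise while preserving planarity: one must check that after splitting $G$ along the cut $S$ and gluing the pieces to their counterparts in the other fibres, the resulting graph is still a \emph{planar} cover, not merely an abstract cover, and that the fold number genuinely drops. Controlling the planar embedding through the surgery—ensuring the pieces can be redrawn without crossings after being reattached along the (at most two) cut vertices—is the main technical obstacle. Since this lemma is quoted from \cite{11}, I would ultimately lean on that reference for the delicate embedding bookkeeping, but the conceptual skeleton is the contradiction-by-minimality argument sketched above.
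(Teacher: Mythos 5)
Your overall plan---rule out small cuts by combining projections/lifts with a surgery that contradicts minimality of the fold number---is the right family of argument; this is how the result is actually proved (the paper itself does not prove the lemma but quotes it from Negami \cite{11}, and the authors' own argument runs along exactly these lines). However, your case analysis contains a genuine error: you have the two cases backwards. You claim that the case $\pi(a)=\pi(b)$ ``already forces a contradiction with the $3$-connectivity of $K$''. It does not. Take $K=K_4$ with vertices $A,B,C,D$ and build the planar $2$-fold cover with two triangles $b_1c_1d_1$, $b_2c_2d_2$ covering $BCD$ and two vertices $a_1,a_2$ covering $A$, where $a_1$ is adjacent to $b_1,c_1,d_2$ and $a_2$ is adjacent to $b_2,c_2,d_1$. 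This is a planar cover of a $3$-connected graph, it is connected and planar (draw the hexagon $a_1b_1d_1a_2b_2d_2$ with $c_1,c_2$ inside), and $\{a_1,a_2\}$ is a $2$-cut whose vertices carry the \emph{same} label. So the equal-label case cannot be excluded by any connectivity argument; it is precisely the case that only minimality kills, via surgery. Conversely, it is the case $\pi(a)\ne\pi(b)$ that is killed by $3$-connectivity: since $K$ minus the two labels is connected, there is a cycle of $K$ through $\pi(a)$ avoiding $\pi(b)$; its lift through a path $u\,a\,v$ with $u,v$ on opposite sides of the cut must cross back through the cut, but the only other cut vertex is labelled $\pi(b)$, which the cycle avoids --- contradiction. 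Your proposed verification in this case (``some component of $K\setminus\{\pi(a),\pi(b)\}$ is missing'') is not how the contradiction arises; components of $G\setminus S$ may well project onto all of $K$ minus the cut labels, and the cycle-lifting argument is what is actually needed.

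Because your sketch claims both cases contradict $3$-connectivity directly, it would prove the lemma \emph{without} the minimality hypothesis, which is impossible: the example above (or two disjoint copies of a planar $3$-connected $K$) is a planar cover of a $3$-connected graph that is not $3$-connected. (For the same reason, your opening claim that connectivity of $K$ alone forces connectivity of $G$ is unjustified; it too needs minimality.) Moreover, when you do fall back on the surgery, you omit the one fact that makes it well defined: in the equal-label case one must first show, again by lifting cycles through the cut, that the labels of the neighbours of $a$ lying in one component of $G\setminus S$ coincide with the labels of the neighbours of $b$ lying in the other component. Only with this matching can one delete the cut and rewire one side onto a single vertex so that every vertex still has its neighbourhood mapped bijectively onto a neighbourhood in $K$, yielding a planar cover of strictly smaller fold and hence the contradiction. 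Without that step, your ``recombination'' is not a defined operation and there is no reason its output is a cover at all.
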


Throughout this paper, we label every vertex of $G$ with the name of the corresponding vertex in $K$ (except sometimes we add a subscript or dash in order to distinguish particular vertices). That is, each vertex $v$ in $G$ gets the same label as $\pi(v)$ in $K$. For the sake of brevity, we will often refer to a vertex of $G$ labelled $x$ simply as `a vertex $x$', or to a triangle in $G$ labelled $(a,b,c)$ as just `a triangle $(a,b,c)$'.

We label the vertices of $K_{1,2,2,2}$ as follows: the vertex of degree $6$ is labelled $0$, and the six vertices of degree $5$ are labelled $\{\pm 1, \pm 2, \pm 3\}$ in such a way that the vertices $i$ and $-i$ are not adjacent for $i=1,2,3$. Note that every vertex $\pm i$ of degree $5$ is adjacent to the vertex $0$ and to four vertices $\pm j, \pm k$, where $\{i,j,k\}=\{1,2,3\}$.

\section{Selecting a particular cover}\label{sec:cover}


Assume $K_{1,2,2,2}$ has a finite planar cover graph. We identify each cover with its plane drawing. Note that this identification is well-defined by Whitney's theorem, as the cover is 3-connected by Lemma~\ref{lem:3-cont}.  Out of all such covers we choose one with the following properties:

\begin{enumerate}[label=(\Alph*),ref=\Alph*]
    \item \label{it:minfold}
    It has the smallest fold number.

    \item \label{it:max3}
    Out of those covers satisfying~\eqref{it:minfold}, it has the maximal number of triangular faces.

    \item \label{it:shortf}
    Any short cycle covering a triangle in $K_{1,2,2,2}$ bounds a face.~(This property is guaranteed by Lemma~\ref{lem:shortcycle}.)
\end{enumerate}

\begin{lemma} 
  Out of those planar covers satisfying~\eqref{it:minfold},~\eqref{it:max3} and~\eqref{it:shortf}, there is a cover such that: 
  \begin{enumerate}[resume,label=(\Alph*),font=\normalfont,ref=\Alph*]
    \item \label{it:longf}
     No long cycle covering a triangle in $K_{1,2,2,2}$ bounds a face.
\setcounter{HConditions}{\value{enumi}}
\end{enumerate}
\end{lemma}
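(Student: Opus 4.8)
The plan is to argue by an extremal choice together with a local edge-swap surgery, using the fact that all global counts are already pinned down. A fibre contains one vertex of degree $6$ (label $0$) and six of degree $5$, so the degree sum is $36n$; hence an $n$-fold cover has $V=7n$, $E=18n$, and by Euler's formula $F=11n+2$. Thus for a fixed fold number the numbers of vertices, edges and faces are all constant, and condition~\eqref{it:max3} is exactly the demand that the number of \emph{triangular} faces be globally maximal. Consequently any modification that keeps the cover structure and the value of $E$ can only preserve or lower the triangular-face count; if one ever raises it, we contradict~\eqref{it:max3} at once. Accordingly, among all covers satisfying~\eqref{it:minfold}--\eqref{it:shortf} I would pick one that in addition \emph{minimises the number of long facial cycles covering triangles}, and try to show this minimum is $0$.

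Suppose for contradiction that a long cycle $C'=a_1b_1c_1a_2b_2c_2\cdots a_kb_kc_k$ with $k\ge 2$, covering a triangle $(a,b,c)$, bounds a face $F$. Consider the move that reroutes, for every $i$, the unique $c$-labelled neighbour of $a_i$ from $c_{i-1}$ to $c_i$: delete the $k$ boundary edges $c_{i-1}a_i$ of $F$ and insert the $k$ chords $a_ic_i$ inside the disk $F$. These chords join boundary vertices at cyclic distance two and are pairwise non-crossing, so the drawing stays plane; no $a_ic_i$ is already present, since otherwise $a_i$ would have two neighbours labelled $c$. Each affected vertex merely exchanges one neighbour for another of the same label (for instance $a_i$ trades $c_{i-1}$ for $c_i$, and $c_i$ trades $a_{i+1}$ for $a_i$), so every neighbourhood keeps its labels and the new graph is again an $n$-fold cover of $K_{1,2,2,2}$ with the same $E$. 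The move destroys $F$, creates the $k$ triangular faces $(a_i,b_i,c_i)$, and merges $F$ with the $k$ faces lying across the deleted edges $c_{i-1}a_i$ into one new face $\hat F$.

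Because~\eqref{it:max3} forbids any increase in triangular faces, a count of the faces involved yields $k-t+[\hat F\text{ triangular}]\le 0$, where $t$ is the number of triangles among the $k$ merged faces. As there are only $k$ merged faces, this forces $t=k$ and $\hat F$ non-triangular; in particular those $k$ faces are distinct triangles. Running the two analogous reroutings (of the $a$-neighbours of the $b_i$ and of the $b$-neighbours of the $c_i$) shows in the same way that the faces across the edges $a_ib_i$, and across the edges $b_ic_i$, are all triangles too. Hence $F$ is completely surrounded by a ring of $3k$ triangular faces, whose apex labels lie in the pairs forced by the adjacencies of $K_{1,2,2,2}$. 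Now inspect $\hat F$ for the first rerouting: its boundary reads $a_1c_1w_2a_2c_2w_3\cdots a_kc_kw_1$, where $w_i$ is the apex of the triangle across $c_{i-1}a_i$, so its label sequence is $a,c,\ell_2,a,c,\ell_3,\dots$ with $\ell_i$ the label of $w_i$. If the $\ell_i$ are not all equal, then $\hat F$ does \emph{not} cover a triangle, so the move has removed one long facial cycle covering a triangle and created none, strictly lowering our extremal quantity while keeping~\eqref{it:minfold}--\eqref{it:shortf} and the maximal triangle count --- the desired contradiction.

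The remaining, genuinely delicate, case is the fully symmetric \emph{stalling} configuration, in which along all three directions the ring apices share a common label, so that each rerouting only replaces $C'$ by a concentric long facial cycle covering a (possibly different) triangle; here the $3k$-gon $F$ is wrapped in a triangulated annulus whose outer boundary is again a $3k$-cycle covering a triangle. I expect this to be the main obstacle, and to resolve it by descent: after fixing a reference face to orient the sphere, each ring-peeling move strips one such annulus and pushes the offending long facial cycle strictly closer to the reference face, so it can recur only finitely often, while the apex-label constraints recorded above forbid an unbounded nest of annuli and force the innermost long cycle to collapse to a short (triangular) one, contradicting that $C'$ was long. Throughout, $3$-connectivity (Lemma~\ref{lem:3-cont}) and Whitney's theorem keep each intermediate embedding essentially unique, so the faces being manipulated are well defined. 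Verifying that the ring-peeling descent is genuinely monotone, and that the three stalling conditions cannot persist indefinitely under the adjacency structure of $K_{1,2,2,2}$, is the crux of the argument.
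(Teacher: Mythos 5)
Your surgery is exactly the one the paper uses (reroute every edge $(c,a)$ of $C'$ to the chord $(a_i,c_i)$ inside $F$), and your counting step correctly reproduces the paper's first conclusions: by~\eqref{it:max3} all faces across the deleted edges must be triangles, and if their apex labels are not all equal the move strictly decreases the number of long cyclic faces, a contradiction with the extremal choice. Up to that point you and the paper coincide.

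The gap is the stalling case, which you explicitly leave unverified, and it is precisely the hard part of the proof. Your proposed ``ring-peeling descent towards a reference face'' does not work as stated, for two reasons. First, when some other non-triangular face $F'$ exists, a descent needs a quantity that provably decreases; the paper gets one by minimising, \emph{over all covers in $\mathcal{G}$}, the face-distance $M$ between a long cyclic face and a non-triangular face, and then performing the surgery along an edge shared with the face $F_{M-1}$ on a geodesic face-path, so that the new long cyclic face $F''$ shares an edge with $F_{M-2}$ and $d(F'',F')\le M-1$. Peeling towards an arbitrary reference face gives no such guarantee: the reference face can itself be one of the absorbed triangles, and nothing prevents the process from cycling (the set of $n$-fold covers is finite, so without a monotone invariant iteration proves nothing). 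Second, and more seriously, the terminal case of the descent is a cover in which \emph{every} face except the long cyclic one is triangular; there is then no non-triangular face to approach at all, every peel produces a combinatorially identical configuration, and your claim that the long cycle is ``forced to collapse to a short one'' has no mechanism behind it (the peels move the long cyclic face outward, absorbing the ring of triangles; nothing shrinks). The paper eliminates this configuration by a completely different, global argument: Euler's formula with $7n$ vertices, $18n$ edges and $11n+2$ faces forces $n=m+1$ where $3m$ is the length of the long cyclic face, so inside $C$ there is exactly one vertex of each label $a,b,c$, forming a triangle $\triangle$ by~\eqref{it:shortf}; parity of $n$ (Archdeacon--Richter) gives $m\ge 3$ odd; and then the $n\ge 4$ vertices labelled $d$ in the annulus between $C$ and $\triangle$, together with a minimal-subtended-path argument among the domains cut off by the edges $(d',a'),(d',b'),(d',c')$, produce an infinite descent and a contradiction. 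Without an argument of this kind for the all-but-one-triangular case, your proof does not close.
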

\begin{proof}
  Call a face bounded by a long cycle covering a triangle in $K_{1,2,2,2}$ \emph{long cyclic}. Arguing by contradiction, suppose that any cover satisfying~\eqref{it:minfold} and~\eqref{it:max3} contains a long cyclic face. Let $\mathcal{G}$ be the set of such covers having the smallest possible number of long cyclic faces. 
  
  We claim that there exists $G \in \mathcal{G}$ with all the faces but one being triangular, and with this remaining face being long cyclic. Suppose this is not true, and so any $G \in \mathcal{G}$ contains a long cyclic face $F$ (potentially, more than one) and a non-triangular face $F' \ne F$ (which may or may not also be long cyclic). Define the distance $d(F, F')$ between $F$ and $F'$ to be the smallest number $m \ge 1$ for which there exists a sequence of faces $F',F_1,F_2, \dots, F_{m-1},F$ of $G$, in which any two consecutive faces share an edge. Let $m(G) \ge 1$ be the minimum of the numbers $d(F, F')$ where $F$ is a long cyclic face of $G$ and $F' \ne F$ is a non-triangular face of~$G$. 
  
  Denote $M = \min \{m(G) \, : \, G \in \mathcal{G}\}$ (note that $M \ge 1$ by our assumption), and choose a drawing $G$, a long cyclic face $F$ of $G$ and a non-triangular face $F' \ne F$ of $G$ such that $d(F,F')=M$. Let $F',F_1,F_2, \dots, F_{M-1},F$ be a sequence of faces of $G$ that realises this distance
  ; note that $F_i$ and $F$ have no common edges for $1 \le i \le M-2$.
  
  Suppose the boundary $C$ of $F$ is the lift of a cycle $abc$ of $K_{1,2,2,2}$ and that the vertices of $C$ are labelled $abcabc\dots$ in the positive direction. Suppose $(c, a)$ is an edge shared by $F_{M-1}$ and $F$. We simultaneously replace the union of all edges $(c, a)$ of $C$ by the union of disjoint edges lying in $F$ by joining every vertex $a$ of $C$ to the next vertex $c$ in the positive direction on $C$. We get another planar cover $G'$ of $K_{1,2,2,2}$. Note that in $G'$, all the faces are still discs (by~\eqref{it:minfold}). Moreover, every face of $G$ sharing an edge $(c, a)$ with $F$ in $G$ must be triangular (note that this implies $M > 1$), for otherwise $G'$ would have had more triangular faces than $G$, in contradiction with~\eqref{it:max3}. The third vertex of any such triangular face has a label different from any of $a, b$ or $c$, and all these vertices are different vertices of $G$. If at least two of them have different labels, then $G'$ still satisfies~\eqref{it:minfold} and~\eqref{it:max3}, and has one fewer long cyclic faces than $G$, in contradiction with the fact that $G \in \mathcal{G}$. Hence all these third vertices must have the same label, so that the above procedure produces a long cyclic face $F''$ of $G'$ (and destroys the long cyclic face $F$ of $G$). As all the faces of $G$ that do not share an edge $(c, a)$ with $F$ are still faces of $G'$, we deduce that $G' \in \mathcal{G}$. By the same reason, all the faces $F', F_1, \dots, F_{M-2}$ of $G$ are faces of $G'$, with any consecutive two sharing an edge. Moreover, by our construction, the face $F_{M-2}$ of $G'$ shares an edge with $F''$ (one of the edges of the ``former" triangular face $F_{M-1}$ of $G$). It follows that the distance $d(F'', F')$ in $G$ is at most $M-1$, a contradiction.
  
  We have now constructed a cover $G \in \mathcal{G}$ with the property that one of its faces $F$ is long cyclic and all the other faces are triangular. We want to show that such $G$ cannot exist. 
  
  Denote $3m, \, m \ge 2$, the length of the cycle $C$ bounding $F$, and assume that $F$ is the external face. As above, suppose that $C$ covers a cycle $abc$ in $K_{1,2,2,2}$. If $G$ is an $n$-fold cover of $K_{1,2,2,2}$, then it has $7n$ vertices and $18n$ edges, and hence $11n+2$ faces, by Euler's formula. Since one of the faces has length $3m$ and all others are triangular, a simple calculation gives $n=m+1$. It follows that inside $C$ we have only one of each vertices with labels $a,b$ and $c$, which must then be the vertices of a triangular face $\triangle$ of $G$ by~\eqref{it:shortf}. Moreover, as $n$ is even by \cite{3} we obtain that $m$ is odd and $m \ge 3$. Let $d \notin \{a,b,c\}$ be a vertex of $K_{1,2,2,2}$ connected to all three vertices $a, b$ and $c$. There are $n \ge 4$ vertices labelled $d$ in the annular domain bounded by $C$ and $\triangle$, and so at least one of them is not connected to any vertex of $\triangle$. Any such vertex of $G$ (call it $d'$), is connected to three vertices labelled $a', b'$ and $c'$ lying on $C$. These three vertices cannot form a $2$-path on $C$, as otherwise the one in the middle would have no other edges of $G$ incident to it, by \eqref{it:shortf}. For each of the three domains into which the edges $(d', a'), (d', b')$ and $(d', c')$ split the interior of $C$, consider the intersection of its boundary with $C$. That intersection is a path on $C$, and for at least two of the three domains, that path contains vertices labelled $a,b$ or $c$ of $C$ other than its endpoints. Hence there is at least one domain with this property that does not contain $\triangle$. We now consider all domains constructed in this way (for different choices of $d'$), and out of them, choose a domain $D$ the intersection $P$ of whose boundary with $C$ is the shortest possible. By construction, the path $P \subset C$ contains at least one vertex labelled $a,b$ or $c$ which is not one of its endpoints, which implies that $D$ contains a vertex labelled $d$. As $D$ does not contain $\triangle$, that vertex $d$ is connected to three internal vertices labelled $a,b$ and $c$ of $P$ hence creating a domain that subtends a shorter path of length greater than one; a contradiction. 
\end{proof}

\section{Selecting a particular subgraph of the cover}\label{sec:pptH}

Assume that we have a cover $G$ of $K_{1, 2, 2, 2}$ satisfying the conditions~\eqref{it:minfold}, \eqref{it:max3}, \eqref{it:shortf} and \eqref{it:longf}.
We refer to 3-cycles in $K_{2,2,2}$ as \emph{octahedral} 3-cycles. 
Clearly, there must be at least one octahedral 3-cycle
whose lift is not a union of 3-cycles in $G$ (otherwise, $G$ has an octahedron subgraph and condition~\eqref{it:shortf} means that there are no other vertices). 
Among all long cycles covering an octahedral 3-cycle, 
choose a cycle $C$ which contains no other such long cycle inside the closed domain that it bounds; call this interior domain $F$. 
Without loss of generality, we can assume that $C$ covers  the $3$-cycle $(1,2,3)$. 
%
%
The domain $F$ is not a face of $G$ by condition \eqref{it:longf}.
Furthermore, $C$ contains no chords, and so there are vertices of $G$ lying in $F$, and hence some vertices labelled $0, -1,-2$ and $-3$ lying in $F$. 


In light of the preceding observations, we define $H$ to be a connected component of the lift of the $K_4$ subgraph on the vertices $0,-1,-2,$ and $-3$ lying inside the domain $F$ that contains no other such component in its interior.
Moreover, we define $G'$ to be the semi-cover of $K_{1,2,2,2}$ consisting of $H$ and all the edges and vertices of $G$ that lie in the internal faces of $H$.

\begin{lemma} \label{l:Hfaces}
The semi-cover $G'$ of $K_{1,2,2,2}$ and the subgraph $H\subset G'$ have the following properties.

\begin{enumerate}[label=\emph{(\alph*)},ref=\alph*]
    \item \label{it:s9H}
    $H \subset G'$ is the lift of the subgraph $K_4 \subset K_{1,2,2,2}$ whose vertices are $0, -1, -2, -3$, 
    $H$ is connected, the restriction of the projection map to $H$ is a (genuine) cover of $K_4$, and the outer boundary of $G'$ is a cycle of $H$.

    \item \label{it:s9tri}    
    All $3$-cycles of $G'$ are facial.
    
    \item \label{it:Htri} Any cycle in $G'$ covering an octahedral $3$-cycle is a triangular face. 

    \item \label{it:s9inn3}
    Every component of the lift of an octahedral $3$-cycle in $G'$ that is not a cycle is a path that starts and ends on the boundary of $G'$. Such a path cannot cover $(1,2,3)$ or $(-1,-2,-3)$, so the lift of these triangles in $G'$ consists only of triangles.

    
    \item \label{it:noK4}
    $H$ is not isomorphic to $K_4$. 

    \item \label{it:notouter}
    $G'$ contains at least one triangle labelled $(1,2,3)$. 


    \item \label{it:2conn}
    $H$ is $2$-connected, and so every internal face of $H$ is homeomorphic to a disc. 

    \item \label{it:mult3}
    The cyclic order of vertex labels around any non-triangular face of $H$ is $0,a_1,b_1,0,$ $a_2,b_2, \dots, 0,a_m,b_m$, where $a_i, b_i \in \{-1,-2,-3\}$; in particular, the length of any facial cycle is a multiple of $3$.

    \item \label{it:no6face}
    No internal face of $H$ is hexagonal. 

    \item \label{l:above} 
    If an internal face of $H$ is $3m$-gonal, where $m \ge 1$, and contains $t$ triangles labelled $(1,2,3)$, then $t < \frac23 m$.
\end{enumerate}
\end{lemma}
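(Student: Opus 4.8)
The plan is to pin down the combinatorics of the interior of the face and then run a degree count on its boundary. Write $\Phi$ for the $3m$-gonal internal face of $H$ and $\overline\Phi$ for its closure. First I would record that every vertex of $G'$ lying strictly inside $\Phi$ is labelled $1,2$ or $3$: any such vertex labelled $0,-1,-2$ or $-3$ would belong to the lift of the $K_4$ on $\{0,-1,-2,-3\}$, hence to a component of that lift lying in the interior of $H$, contradicting the choice of $H$ in Section~\ref{sec:pptH}. By property~\eqref{it:s9inn3} the lift of the octahedral triangle $(1,2,3)$ in $G'$ is a disjoint union of triangles, so the interior vertices of $\Phi$ are exactly the vertices of the $(1,2,3)$-triangles contained in $\Phi$; these are precisely the $t$ triangles counted in the statement, and there are exactly $3t$ interior vertices.

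Next I would use the cover/degree conditions to describe each interior vertex locally. An interior vertex labelled $1$ is not incident with the outer face of $G'$, so it has exactly the neighbours labelled $0,2,-2,3,-3$; those labelled $2$ and $3$ are its two partners in its $(1,2,3)$-triangle, while those labelled $0,-2,-3$ must lie on $\partial\Phi$. Thus every interior vertex sends exactly two edges to interior vertices and exactly three edges to $\partial\Phi$, namely one to a vertex labelled $0$ and two to vertices labelled from $\{-1,-2,-3\}$ (the same description holds for labels $2$ and $3$ by symmetry). In total the interior vertices send $6t$ edges to the negatively labelled boundary vertices.

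The count is then completed on the boundary side. By property~\eqref{it:mult3} the cyclic label sequence of $\partial\Phi$ is $0,a_1,b_1,0,\dots,0,a_m,b_m$ with $a_i,b_i\in\{-1,-2,-3\}$, so $\partial\Phi$ carries exactly $m$ vertices labelled $0$ and $2m$ vertices labelled negatively. A boundary vertex labelled $-1$ has only $2$ and $3$ as positive neighbours, so inside $\Phi$ it is adjacent to at most one vertex labelled $2$ and at most one labelled $3$, that is, to at most two interior vertices; the same bound holds for $-2$ and $-3$. Hence the $2m$ negative boundary vertices receive at most $4m$ edges from the interior, giving $6t\le 4m$ and therefore $t\le\tfrac23 m$.

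It remains to upgrade this to a strict inequality, and this is where I expect the real difficulty. When $3\nmid m$ the number $\tfrac23 m$ is not an integer, so $t\le\lfloor\tfrac23 m\rfloor<\tfrac23 m$ and we are done; the only case needing work is $3\mid m$, where equality $t=\tfrac23 m$ must be excluded. Equality forces the saturated configuration in which every negative boundary vertex has exactly two interior neighbours, one of each of its two admissible positive labels, producing perfect matchings between each class of interior vertices and the compatible classes of negative boundary vertices, a very rigid pattern. A useful tool for obstructing it is the observation that no vertex labelled $0$ can be joined to all three vertices of a single $(1,2,3)$-triangle $T$: the six edges would form a $K_4$ covering $\{0,1,2,3\}$, and since $T$ is already a face, one of the remaining triangular $3$-cycles of this $K_4$ would fail to bound a face, contradicting property~\eqref{it:s9tri}. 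I would then combine this with a traversal of $\partial\Phi$, tracking the faces forced along the boundary by saturation, to reach a contradiction with planarity (or again with~\eqref{it:s9tri} and~\eqref{it:shortf}); making this final step rigorous is the main obstacle.
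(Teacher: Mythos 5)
Your proposal addresses only the final assertion (j) of this ten-part lemma: assertions (a)--(i) are nowhere proved, and several of them (you cite (b), (d) and (h)) are invoked as tools. Since the paper proves the lemma part by part, each part being allowed to use the earlier ones, your use of those properties is not circular; but as a proof of the stated lemma the proposal is incomplete from the outset. Within part (j), your counting is correct and is essentially the paper's own: your double count $6t\le 4m$ (each interior vertex sends two edges to negatively labelled boundary vertices, each such boundary vertex absorbs at most two) is the same as the paper's bound $t\le\min(q_1,q_2,q_3)$ combined with $q_1+q_2+q_3=2m$, and your reduction of the strict inequality to the case $3\mid m$, $t=\tfrac23 m$ is also fine.

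The genuine gap is the equality case, and you concede it yourself (``making this final step rigorous is the main obstacle''). Your auxiliary observation --- that no vertex labelled $0$ can be adjacent to all three vertices of a single $(1,2,3)$-triangle, since then all four triangles of the resulting $K_4$ would have to be facial by (b), leaving no room for the rest of $G'$ --- is true, but it is far from sufficient, and the promised ``traversal of $\partial\Phi$ tracking forced faces'' is a hope, not an argument. What the paper actually does under equality is the following. First, saturation forces every boundary vertex labelled $-i$ to send both of its edges $(-i,j)$ and $(-i,k)$ into the face, so any component of the lift of the octahedral triangle $(-i,j,k)$ that starts in the face can never leave it; by part (c) each such component is therefore a triangle. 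This rigidifies the structure: every triangle $\triangle=(1,2,3)$ inside the face acquires a full attachment configuration $T(\triangle)$ in which the three vertices $-1,-2,-3$ attached to $\triangle$ lie on the boundary and alternate along it with three \emph{distinct} vertices labelled $0$. Second, an infinite descent: choose a boundary path $\gamma_1$ between two $0$-vertices of $T(\triangle_1)$ avoiding the third; by part (h) its interior contains a negative-labelled vertex not belonging to $T(\triangle_1)$, which forces a second triangle $\triangle_2$ whose six attachment vertices all lie on $\gamma_1$, yielding a proper sub-path $\gamma_2\subsetneq\gamma_1$, and so on, contradicting finiteness of the cover. Nothing in your proposal supplies either of these two steps, and the strict inequality is the entire content of (j) beyond the routine count, so the claim remains unproved.
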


\begin{proof}

  Property~\eqref{it:s9H} follows from the definition of $H$. Property~\eqref{it:s9tri} follows from~\eqref{it:shortf}.
  Properties~\eqref{it:Htri} and~\eqref{it:s9inn3} follow from the fact that the cycle $C$ contains no long octahedral 3-cycle in its interior.
  $H$ is not isomorphic to $K_4$ (property~\eqref{it:noK4}) because if it were, the middle vertex would be incident only to triangular faces of $H$, and thus no vertex labelled $1,2$ or $3$.
  Property~\eqref{it:notouter} holds because otherwise $H$ would be an outerplanar cubic graph.

  For property~\eqref{it:2conn}, assume that $H$ contains a bridge $(a,b)$. Let $H_1$ be the component of $H$ that contains the vertex $b$ of the bridge when $(a,b)$ is deleted. Let $m$ be the number of vertices $c \ne a,b$ in $H_1$. Then $H_1$ has exactly $m$ vertices $b$ and exactly $m$ vertices $a$, which is a contradiction, as there is a vertex $b$ in $H_1$ not connected to any vertex $a$ in $H_1$. Since $H$ is $3$-regular and contains no bridge it is $2$-connected.

  For property~\eqref{it:mult3}, assume a cycle $C'$ bounds a face $F'$ of $H$. If $C'$ has length $3$, then it bounds a triangular face by~\eqref{it:shortf}. 
  Suppose the length of $C'$ is at least $4$. Then out of any three consecutive vertices along $C'$ we must have at least one $0$ (for if the vertices were $-1,-2,-3$, in this order, then there would be an edge $(-2,0)$ lying outside $F'$ and inside the triangular face $(-1,-2,-3)$). Clearly out of any three consecutive vertices along $C'$ we cannot have more than one $0$, which proves the claim.

  For property~\eqref{it:no6face}, assume that $F'$ is an internal hexagonal face. 
  We say that an internal face of $H$ is \emph{empty} if it contains no vertices labelled $1,2,$ or $3$ inside it, and is \emph{full} otherwise. 
  By assertion~\eqref{it:mult3}, the cyclic order of vertex labels on its boundary $C'$ is $0,a_1,b_1,0,a_2,b_2$, where $a_i, b_i \in \{-1,-2,-3\}$. Moreover, $\{a_1, a_2\} \cap \{b_1, b_2\} = \varnothing$, so without loss of generality we can assume that $a_1=a_2=-1$ and $b_1=-2$. If $b_2 = -2$, then the face $F'$ is empty (there is no $-3$ vertex on $C'$), and so it is a face of $G$, in contradiction with condition \eqref{it:longf}. Suppose $b_2=-3$, so that the sequence of labels along $C'$ is $0,-1,-2,0,-1,-3$. First suppose that $F'$ is full. Then $F'$ contains exactly one triangle $(1,2,3)$, as there is only one $-3$ on $C'$ to connect to $1$. Then the union of the two edges $(1,-2)$ and $(1,-3)$ splits $F'$ into two pentagonal domains, and the two vertices $2$ and $3$ both lie in one of them. But then both vertices $2$ and $3$ are connected to each of the vertices $0,-1$ on the boundary of this pentagonal domain which gives a crossing. Now assume $F'$ is empty (and $b_2$ is still $-3$). We note that the face of $H$ adjacent to the edge $(-1,-2)$ from the opposite side of $F'$ is a triangle $(-1,-2,-3')$ (where we write $-3'$ to indicate that this vertex is different from the vertex $-3$ on $C'$). If the face $F''$ of $H$ adjacent to the first edge $(0,-1)$ in our sequence $0,-1,-2,0,-1,-3$ from the opposite side of $F'$ is a triangle, then $F''$ must be the triangle $(0,-1,-3')$, and so out of the three faces adjacent to the vertex $-1$, two are triangles, and the third is $F'$, which implies that $F'$ must be full. It follows that $F''$ is not a triangle. But then, as $F'$ is empty, it must be a face of $G$, and so replacing two edges $(0,-1)$ of $C'$ by the two chords $(0,-1)$ we increase the number of triangles in $G$, in contradiction with~\eqref{it:max3}. 
  
  For property~\eqref{l:above}, 
 let $F'$ be such a face. Denote $q_i, \, i=1,2,3$, the number of labels $(-i)$ on its boundary $C'$. We have $q_1+q_2+q_3=2m$ and $t \le \min(q_1,q_2,q_3)$. This immediately gives a non-strict inequality $t \le \frac23 m$. Assuming there is an equality, we must necessarily have $m=3s$, for some $s \in \mathbb{N}$, and then $t=q_1=q_2=q_3=2s$. But then every vertex labelled $(-i)$ on $C'$ is connected to vertices labelled $j$ and $k$, both lying in $F'$, where $\{i,j,k\}=\{1,2,3\}$. This means that any cycle in the lift of any triangle $(-i,j,k)$ of $K_{1,2,2,2}$ which starts in $F'$, does not leave (the closure of) $F'$.  So by property~\eqref{it:Htri}, any such lift is the union of triangles. Thus for example, any vertex $-1$ on $C'$ is connected to two vertices labelled $2$ and $3$ in $F'$, which are connected to each other and both connected to the same vertex labelled $1$ lying in $F'$. So for any triangle $\triangle=(1,2,3)$ lying in $F'$, there are vertices $-1,-2,-3$ on $C'$ each of which is connected to the corresponding pair of vertices of $\triangle$. Furthermore, each vertex of $\triangle$ is connected to a vertex labelled $0$ on $C'$; these three $0$ vertices alternate along $C'$ with the above vertices $-1,-2,-3$.
 We denote $T(\triangle)$ the union of all the edges of $G$ connected to the vertices of a triangle $\triangle=(1,2,3)$. Choose one such triangle, $\triangle_1$, construct $T(\triangle_1)$ as above and choose a  path $\gamma_1$ on $C'$ between two $0$ vertices of $T(\triangle_1)$ and not containing the third $0$ vertex of $T(\triangle_1)$. By property~\eqref{it:mult3}, $\gamma_1$ contains at least two vertices of $C'$ with non-zero labels, and so at least one such vertex not belonging to $T(\triangle_1)$. Then there exists a triangle $\triangle_2=(1,2,3)$ lying in $F' \setminus T(\triangle_1)$ such that $T(\triangle_2)$ contains that vertex and that all the six vertices of $C' \cap T(\triangle_2)$ lie on $\gamma_1$. We can then choose a path $\gamma_2 \subset C'$ between two $0$ vertices of $C' \cap T(\triangle_2)$ which is a proper sub-path of $\gamma_1$, and repeat the argument. As the cover is finite, we arrive at a contradiction. 
\end{proof}

For the rest of the paper we consider a planar semi-cover $G'$ of $K_{1,2,2,2}$ and its subgraph $H$ that satisfies the conditions listed in Lemma~\ref{l:Hfaces}, and such that $H$ has the smallest possible number of vertices (i.e. the smallest fold number as a cover of $K_4$).
Our overall aim is to show that such a semi-cover does not exist. In fact the following is a strengthening of Conjecture~\ref{conj: negami}.

\begin{conjecture}\label{conj:apn}
There is no planar semi-cover $G'$ of $K_{1,2, 2, 2}$ having the properties listed in Lemma~\ref{l:Hfaces}. 
\end{conjecture}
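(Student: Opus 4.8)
The plan is to argue by contradiction, exploiting the minimality built into the choice of $G'$ and $H$: among all semi-covers satisfying Lemma~\ref{l:Hfaces}, $H$ has the smallest fold number as a cover of $K_4$. I would first record the base observation that $H$ must possess at least one non-triangular internal face. Indeed, by property~\eqref{it:notouter} there is a triangle $(1,2,3)$ in $G'$; since every facial cycle of $H$ carries only the labels $0,-1,-2,-3$, such a triangle can only live in the interior of a non-triangular internal face of $H$, because a triangular face of $H$ is, by~\eqref{it:shortf}, a face of $G$ and hence empty. Thus the number $k$ of non-triangular internal faces of $H$ satisfies $k\ge 1$. Following the scheme used in the proof of~\eqref{it:longf} in Section~\ref{sec:cover}, I would then perform a secondary minimisation, choosing among all minimal-fold admissible $H$ one for which $k$ is as small as possible; the whole conjecture reduces to deriving a contradiction from the existence of such an $H$.

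The argument then splits according to $k$. The cases $k=1$ and $k=2$ are exactly the content of Sections~\ref{s:bsn} and~\ref{s:2f}, so it remains to eliminate $k\ge 3$. Euler's formula for the $3$-regular plane graph $H$ gives the total face charge $\sum_f(\deg f-6)=-12$, and since each non-triangular internal face has length a multiple of $3$ that is at least $9$ (by~\eqref{it:mult3} and~\eqref{it:no6face}), the number of such faces is only bounded linearly in the fold number; a bare case analysis on $k$ therefore cannot terminate, and a genuine reduction step is indispensable. The core of the plan is a surgery generalising the edge‑replacement of Section~\ref{sec:cover} and of the two base cases. Given a minimal $H$ with $k\ge 3$, I would select an \emph{extremal} non-triangular face $F'$ — one whose interior contains the fewest triangles $(1,2,3)$, a quantity controlled by the strict bound $t<\tfrac23 m$ of property~\eqref{l:above} — dissect $F'$ along the edges emanating from an interior $(1,2,3)$-triangle, and reconnect the boundary cycle $C'=\partial F'$ by chords so as to excise a block of vertices. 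The goal is a new planar semi-cover, still satisfying every condition of Lemma~\ref{l:Hfaces}, but with strictly fewer non-triangular internal faces (ideally also strictly smaller fold number), contradicting the minimality just arranged.

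The main obstacle will be proving that such a reduction always exists and always yields an admissible semi-cover. When $k\ge 3$ the non-triangular faces may be nested inside one another or may share long stretches of boundary, so excising material from one face can create a hexagonal face (violating~\eqref{it:no6face}), destroy the short/long dichotomy of~\eqref{it:shortf}--\eqref{it:longf}, or disconnect $H$ (violating~\eqref{it:2conn}). The crux is to show that an extremal face can always be chosen for which the surgery is \emph{clean}, and that the labels forced along $C'$ by property~\eqref{it:mult3} make the reconnecting chords consistent with a genuine $K_4$-cover; this is precisely the step that separates the full conjecture from the bounded-fold Theorem~\ref{th:nofold}. As a parallel route I would develop a discharging argument on the same total charge $-12$: assign to each face the charge $\deg f-6$, and use property~\eqref{l:above} together with the two base cases to show that every non-triangular internal face can discharge its positive surplus $3(m-2)$ onto the triangular faces forced to surround it, so that the total charge becomes non-negative, contradicting the fixed value $-12$. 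Either route reduces the conjecture to a finite, if delicate, local analysis of how a single non-triangular face interacts with its neighbours, and my effort would concentrate on making one of these two local analyses uniform in $k$.
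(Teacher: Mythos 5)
The statement you set out to prove is presented in the paper as Conjecture~\ref{conj:apn}, explicitly an \emph{open} strengthening of Negami's conjecture: the authors prove only special cases, namely the one-face case (Corollary~\ref{c:nonecklace}\eqref{it:noneckl}), the two-face case (Proposition~\ref{p:twofaces}), the bead-sharing restriction (Proposition~\ref{p:share}), and the bounded-fold consequences assembled into Theorem~\ref{th:nofold}. Your proposal correctly situates the first two of these as the cases $k=1$ and $k=2$ of an induction on the number $k$ of internal non-triangular faces of $H$, but beyond that it is a research programme rather than a proof. The pivotal step --- that for every minimal $H$ with $k\ge 3$ there exists a ``clean'' surgery excising a triangle $(1,2,3)$ together with part of the boundary and producing a strictly smaller semi-cover still satisfying all the conditions of Lemma~\ref{l:Hfaces} --- is stated as a goal, and you yourself flag it as ``precisely the step that separates the full conjecture from the bounded-fold Theorem''. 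That is the gap, and it is essentially the entire content of the conjecture. Note that the paper's surgeries are only known to work in rigid geometric situations: the removal move of Corollary~\ref{c:nonecklace}\eqref{it:noexts} requires a triangle supported on a \emph{single string} shared with the external face, and the necklace reconstructions in Propositions~\ref{p:twofaces} and~\ref{p:share} lean on Lemma~\ref{l:support1}, whose classification of minimal triangles needs all nine attachment edges to land on one string. For $k\ge 3$ a triangle will in general attach to several strings and to non-string boundary vertices, no analogue of Lemma~\ref{l:support1} is available, and your sketch gives no mechanism for choosing the reconnecting chords so that, e.g., no hexagonal face (Lemma~\ref{l:Hfaces}\eqref{it:no6face}) or connectivity failure (Lemma~\ref{l:Hfaces}\eqref{it:2conn}) is created. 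There is also a bookkeeping subtlety you should fix even at the level of the plan: you minimise first the fold number and then $k$, but a surgery that reduces the fold number may increase $k$, so the lexicographic descent must be verified against whichever move you construct.

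The discharging fallback is likewise not in a state where it could close the argument. The identity $\sum_f(\deg f-6)=-12$ is correct for the $3$-regular plane graph $H$, but to contradict it you must give every triangular face (charge $-3$) at least $3$ units from adjacent faces, including the faces of each bead, whose two triangles carry a joint deficit of $-6$ while a $9$-face has surplus only $+3$; Proposition~\ref{p:share} says two $9$-faces cannot share a bead, which \emph{constrains} rather than supplies the needed flow, and the positive charge of the external face (which your rule ignores) must enter the accounting. The paper itself records that Takahashi's discharging argument reached only fold number less than $12$, which is concrete evidence that no straightforward charge redistribution of the kind you sketch suffices. In short: your base cases are exactly the paper's theorems, your reductions restate the problem, and the uniform-in-$k$ local analysis you defer to is the open question itself.
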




       %


\section{Beads and Necklaces}
\label{s:bsn}

Consider the subgraph $H$ of a semi-cover $G'$ satisfying Lemma~\ref{l:Hfaces}. Recall that it is a cover of $K_4$ with vertices $\{0,-1,-2,-3\}$ with all cycles that cover the triangle $(-1,-2,-3)$ being triangular faces of $G'$.

The central argument in the proofs of non-existence of a planar cover for several graphs was based on the study of the `beads' and their arrangements called the `necklaces' \cite{1}. In our setting, a \emph{bead} is a labelled subgraph of $H$ as shown on the left in Figure~\ref{fig:bts}, where $\{i,j,k\}=\{1,2,3\}$.
A maximal subgraph of $H$ consisting of a sequence of beads, every two consecutive ones of which are connected by an edge, together with the two edges attached to the first and to the last bead is called a \emph{string}, as in the middle in Figure~\ref{fig:bts}, where $k \in \{1,2,3\}$ (maximality means that the subgraph is not properly contained in any other such subgraph). Note that all the vertices of a string labelled $-k$ are indeed forced to have the same label (we will say that the string has \emph{type $-k$}), while the labelling of other vertices may be different for different beads. We define a \emph{necklace} as a cyclic string of beads. Our main aim in this section is to prove that $H$ cannot be a necklace.

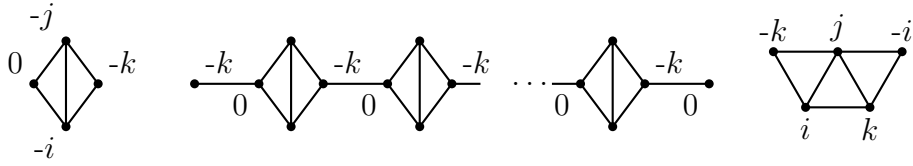
\begin{figure}[ht]
\centering
\begin{tikzpicture}[scale=0.57]
\def \r {6}
\def \rb {1}
\def \rv {3}
\def \rva {4.5}
\def \ger{2/3}
\begin{scope}
  \coordinate (-1) at (0,-\rb); \fill (-1) circle (3pt); \draw (-1) node[below left] {-$i$};
  \coordinate (-2) at (0,\rb); \fill (-2) circle (3pt); \draw (-2) node[above left] {-$j$};
  \coordinate (-3) at (\rv/4,0); \fill (-3) circle (3pt); \draw (-3) node[above right] {-$k$};
  \coordinate (0) at (-\rv/4,0); \fill (0) circle (3pt); \draw (0) node[above left] {0};
  \draw[thick] (-1)--(-3)--(-2)--(-1)--(0)--(-2);
\end{scope}
\begin{scope}[shift={(7*\r/8,0)}]
  \foreach \x in {0,2,5}{
    \coordinate (-1l\x) at (\x*\rv/2,-\rb); \fill (-1l\x) circle (3pt); 
    \coordinate (-2l\x) at (\x*\rv/2,\rb); \fill (-2l\x) circle (3pt); 
    \coordinate (-3l\x) at (\rv/4+\x*\rv/2,0); \fill (-3l\x) circle (3pt); \draw (-3l\x) node[above right] {-$k$};
    \coordinate (0l\x) at (-\rv/4+\x*\rv/2,0); \fill (0l\x) circle (3pt); \draw (0l\x) node[below left] {0};
    \draw[thick] (-1l\x)--(-3l\x)--(-2l\x)--(-1l\x)--(0l\x)--(-2l\x);
};
  \coordinate (0e) at (13*\rv/4,0); \fill (0e) circle (3pt); \draw (0e) node[below left] {0};
  \coordinate (-3e) at (-3*\rv/4,0); \fill (-3e) circle (3pt); \draw (-3e) node[above right] {-$k$};
  \draw[thick] (0l0) -- (-3e) (-3l0) -- (0l2) (-3l2) -- ++(\ger,0) (0l5) -- ++(-\ger,0) (-3l5) -- (0e); \draw (15*\rv/8,0) node {$\dots$};
\end{scope}
\begin{scope}[shift={(3*\r,0)}]
  \coordinate (2l) at (0,\rv/4); \fill (2l) circle (3pt); \draw (2l) node[above] {$j$};
  \coordinate (-1l) at (\rv/2,\rv/4); \fill (-1l) circle (3pt); \draw (-1l) node[above] {-$i$};
  \coordinate (-3l) at (-\rv/2,\rv/4); \fill (-3l) circle (3pt); \draw (-3l) node[above] {-$k$};
  \coordinate (3l) at (\rv/4,{(1-sqrt(3))/4*\rv}); \fill (3l) circle (3pt); \draw (3l) node[below] {$k$};
  \coordinate (1l) at (-\rv/4,{(1-sqrt(3))/4*\rv}); \fill (1l) circle (3pt); \draw (1l) node[below] {$i$};
  \draw[thick] (1l)--(-3l)--(2l)--(-1l)--(3l)--(2l)--(1l)--(3l);
\end{scope}
\end{tikzpicture}
\caption{Left to right: a bead, a string (of type $-k)$ and a trapezium (of type $j$).}
\label{fig:bts}
\end{figure}

We call the vertices $-i$ and $-j$ of the bead on the left in Figure~\ref{fig:bts} the \emph{inner vertices}, and we say that a bead \emph{lies} on the boundary of a face $F$ of $H$ if one of its inner vertices belongs to the boundary of $F$. Clearly, any inner vertex of a bead belongs to a single non-triangular face of $H$ (which can be the external face $F_e$), and it is not hard to see that two inner vertices of a bead must belong to the boundaries of two different non-triangular faces $F'$ and $F''$; in that case, we say that the faces $F'$ and $F''$ \emph{share a bead}. It is also clear (and this is the most valuable property of a bead) that if an inner vertex $-i$ of a bead lies on the boundary of a non-triangular face $F$ of $H$, then both edges $(-i,j)$ and $(-i,k)$ of $G'$ incident to that vertex, where $\{i,j,k\}=\{1,2,3\}$, go in the domain $F$ (that is, lie in the closure of $F$).

Suppose $F$ is an internal, non-triangular face of $H$ having a string $s$ lying on its boundary. A triangle $\triangle$ with the vertices $(1,2,3)$ lying in $F$ has nine edges of $G'$ joining its vertices with some vertices on the boundary of $F$ (because each vertex has degree 5).
We call the these vertices \emph{vertices of attachment \emph{(}of} $\triangle$). We say that a triangle $\triangle$ with the vertices $(1,2,3)$ lying in $F$ is \emph{supported on the string $s$}, if all its vertices of attachment lie on $s$. For a triangle $\triangle$ supported on a string $s$, consider the union $U_\triangle$ of the domains each of which is bounded by two edges between the vertices of $\triangle$ and of $s$, a path on $s$ between the corresponding vertices of attachment (and possibly an edge of $\triangle$). If $U_\triangle$ contains another triangle $\triangle'$  with the vertices $(1,2,3)$, we write $\triangle' \prec \triangle$. Note that the triangle $\triangle'$ is also supported on $s$ and that the relation $\prec$ is a strict partial order on the set of triangles $(1,2,3)$ supported on $s$. We call a triangle $\triangle$ supported on $s$ \emph{minimal}, if it is minimal relative to $\prec$.

In the following, given a string $s$ (of type $-k,\, k \in \{1,2,3\}$) lying on the boundary of a face $F$, we refer to the direction on the path of $s$ lying on the boundary of $F$ from its endpoint labelled $0$ to its other endpoint labelled $-k$ as the direction ``up" (so we think of $s$ as being positioned vertically, as in Figure~\ref{fig:support1}); in this sense, we will speak of a vertex of $s$ lying ``above" another vertex of $s$ or of the ``top" and the ``bottom" vertex of a set of vertices of that path.

We prove the following technical lemma first. 
We call a \emph{trapezium (of type $j$)} 
a labelled subgraph of $H$ shown on the right in Figure~\ref{fig:bts}, where $\{i,j,k\}=\{1,2,3\}$.

\begin{lemma} \label{l:support1}
Suppose $F$ is an internal, non-triangular face of $H$ having a string $s$ of type $-k,\, k \in \{1,2,3\}$, lying on its boundary. 

    \begin{enumerate}[label=\emph{(\alph*)},ref=\alph*]
      \item \label{it:min31} 
        Let $(1,2,3)$ be a minimal triangle supported on the string $s$ and lying in $F$. Then the triangle and the nine edges incident to its vertices are positioned as in one of the three cases given in Figure~\ref{fig:support1}, where $\{i,j,k\}=\{1,2,3\}$ \emph{(}the three cases only differ by the positions of the two edges, $(-k,i)$ and $(0,j)$\emph{)}.

      \item \label{it:from0to-31} 
        Given a non-empty set of triangles $(1,2,3)$ lying in $F$ and supported on the string $s$, consider the set $A$ of all their vertices of attachment. Then the top vertex of $A$ has label $-k$ and the bottom vertex has label $0$.

      \item \label{it:trap-3top1}
        Suppose a trapezium of type $j$ \emph{(}as on the right in Figure~\eqref{fig:bts}\emph{)} lies in the \emph{(}closure of the\emph{)} face $F$, with both vertices $-k$ and $-i$ lying on $s$, where $\{i,j,k\}=\{1,2,3\}$. Then the vertex $-k$ of the trapezium lies above the vertex $-i$ on the string $s$.
    \end{enumerate}
   \end{lemma}

\begin{figure}[ht]
\centering
\begin{tikzpicture}[scale=0.5]
\def \r {6}
\def \rb {1.5}
\def \rv {4} 
\def \rva {4.5}
\def \ger{0.5}
\begin{scope}
  \coordinate (-3t) at (0,7*\rv/4); \fill (-3t) circle (3pt); \draw (-3t) node[below left] {-$k$};
  \coordinate (0t) at (0,5*\rv/4); \fill (0t) circle (3pt); \draw (0t) node[above left] {0};
  \coordinate (-2l) at (-\rb,\rv); \fill (-2l) circle (3pt); \draw (-2l) node[above] {-$j$};
  \coordinate (-1r) at (\rb,\rv); \fill (-1r) circle (3pt); \draw (-1r) node[shift=({-0.15,-0.35})] {-$i$}; %
  \coordinate (-3c) at (0,3*\rv/4); \fill (-3c) circle (3pt); \draw (-3c) node[below left] {-$k$};
  \coordinate (0c) at (0,\rv/4); \fill (0c) circle (3pt); \draw (0c) node[above left] {0};
  \coordinate (-1l) at (-\rb,0); \fill (-1l) circle (3pt); \draw (-1l) node[above] {-$i$};
  \coordinate (-2r) at (\rb,0); \fill (-2r) circle (3pt); \draw (-2r) node[below] {-$j$};
  \coordinate (-3b) at (0,-\rv/4); \fill (-3b) circle (3pt); \draw (-3b) node[below left] {-$k$};
  \coordinate (0b) at (0,-3*\rv/4); \fill (0b) circle (3pt); \draw (0b) node[above left] {0};
  \draw[thick] (-3t)--(0t)--(-2l)--(-3c)--(0c)--(-1l)--(-3b)--(0b) (0t)--(-1r)--(-3c) (-1r)--(-2l) (0c)--(-2r)--(-3b) (-2r)--(-1l);
  \draw[thick] (-3t) -- ++(-\ger,\ger) (-3t) -- ++(\ger,\ger) (0b) -- ++(-\ger,-\ger) (0b) -- ++(\ger,-\ger);
  \coordinate (3) at (5*\rb/4,\rv/2); \fill (3) circle (3pt); \draw (3) node[shift=({0.1,-0.25})] {$k$};
  \coordinate (2) at (9*\rb/4,3*\rv/4); \fill (2) circle (3pt); \draw (2) node[above right] {$j$};
  \coordinate (1) at (9*\rb/4,\rv/4); \fill (1) circle (3pt); \draw (1) node[right] {$i$};
  \draw[thick] (3)--(2)--(1)--(3);
  \draw[ultra thin] (2)--(-1r) -- (3) -- (-2r)--(1) (0c)--(3);
  \draw[ultra thin] (0t) .. controls (2*\rb,\rv) .. (2) (-3t) .. controls (7*\rb/4,3*\rv/2) .. (2) (-3t) .. controls (3*\rb,3*\rv/2) .. (1);
  \draw[ultra thin] (0b) .. controls (7*\rb/4,-\rv/2) .. (1);
  \draw (3/2*\ger,-7*\rv/8) node {$s$}; \draw (2*\rb,-3*\rv/4) node {$F$};
\end{scope}
\begin{scope}[shift={(11*\r/8,0)}]
  \coordinate (-3t) at (0,7*\rv/4); \fill (-3t) circle (3pt); \draw (-3t) node[below left] {-$k$};
  \coordinate (0t) at (0,5*\rv/4); \fill (0t) circle (3pt); \draw (0t) node[above left] {0};
  \coordinate (-2l) at (-\rb,\rv); \fill (-2l) circle (3pt); \draw (-2l) node[above] {-$j$};
  \coordinate (-1r) at (\rb,\rv); \fill (-1r) circle (3pt); \draw (-1r) node[shift=({-0.15,-0.35})] {-$i$}; %
  \coordinate (-3c) at (0,3*\rv/4); \fill (-3c) circle (3pt); \draw (-3c) node[below left] {-$k$};
  \coordinate (0c) at (0,\rv/4); \fill (0c) circle (3pt); \draw (0c) node[above left] {0};
  \coordinate (-1l) at (-\rb,0); \fill (-1l) circle (3pt); \draw (-1l) node[above] {-$i$};
  \coordinate (-2r) at (\rb,0); \fill (-2r) circle (3pt); \draw (-2r) node[below] {-$j$};
  \coordinate (-3b) at (0,-\rv/4); \fill (-3b) circle (3pt); \draw (-3b) node[below left] {-$k$};
  \coordinate (0b) at (0,-3*\rv/4); \fill (0b) circle (3pt); \draw (0b) node[above left] {0};
  \draw[thick] (-3t)--(0t)--(-2l)--(-3c)--(0c)--(-1l)--(-3b)--(0b) (0t)--(-1r)--(-3c) (-1r)--(-2l) (0c)--(-2r)--(-3b) (-2r)--(-1l);
  \draw[thick] (-3t) -- ++(-\ger,\ger) (-3t) -- ++(\ger,\ger) (0b) -- ++(-\ger,-\ger) (0b) -- ++(\ger,-\ger);
  \coordinate (3) at (5*\rb/4,\rv/2); \fill (3) circle (3pt); \draw (3) node[shift=({0.1,-0.25})] {$k$};
  \coordinate (2) at (9*\rb/4,3*\rv/4); \fill (2) circle (3pt); \draw (2) node[above right] {$j$};
  \coordinate (1) at (9*\rb/4,\rv/4); \fill (1) circle (3pt); \draw (1) node[right] {$i$};
  \draw[thick] (3)--(2)--(1)--(3);
  \draw[ultra thin] (2)--(-1r) -- (3) -- (-2r)--(1) (0c)--(3);
  \draw[ultra thin] (0t) .. controls (2*\rb,\rv) .. (2) (-3t) .. controls (7*\rb/4,3*\rv/2) .. (2) (-3b) .. controls (3*\rb/2,-\rv/4) .. (1);
  \draw[ultra thin] (0b) .. controls (7*\rb/4,-\rv/2) .. (1);
  \draw (3/2*\ger,-7*\rv/8) node {$s$}; \draw (2*\rb,-3*\rv/4) node {$F$};
\end{scope}
\begin{scope}[shift={(11*\r/4,0)}]
  \coordinate (-3t) at (0,7*\rv/4); \fill (-3t) circle (3pt); \draw (-3t) node[below left] {-$k$};
  \coordinate (0t) at (0,5*\rv/4); \fill (0t) circle (3pt); \draw (0t) node[above left] {0};
  \coordinate (-2l) at (-\rb,\rv); \fill (-2l) circle (3pt); \draw (-2l) node[above] {-$j$};
  \coordinate (-1r) at (\rb,\rv); \fill (-1r) circle (3pt); \draw (-1r) node[shift=({-0.15,-0.35})] {-$i$}; %
  \coordinate (-3c) at (0,3*\rv/4); \fill (-3c) circle (3pt); \draw (-3c) node[below left] {-$k$};
  \coordinate (0c) at (0,\rv/4); \fill (0c) circle (3pt); \draw (0c) node[above left] {0};
  \coordinate (-1l) at (-\rb,0); \fill (-1l) circle (3pt); \draw (-1l) node[above] {-$i$};
  \coordinate (-2r) at (\rb,0); \fill (-2r) circle (3pt); \draw (-2r) node[below] {-$j$};
  \coordinate (-3b) at (0,-\rv/4); \fill (-3b) circle (3pt); \draw (-3b) node[below left] {-$k$};
  \coordinate (0b) at (0,-3*\rv/4); \fill (0b) circle (3pt); \draw (0b) node[above left] {0};
  \draw[thick] (-3t)--(0t)--(-2l)--(-3c)--(0c)--(-1l)--(-3b)--(0b) (0t)--(-1r)--(-3c) (-1r)--(-2l) (0c)--(-2r)--(-3b) (-2r)--(-1l);
  \draw[thick] (-3t) -- ++(-\ger,\ger) (-3t) -- ++(\ger,\ger) (0b) -- ++(-\ger,-\ger) (0b) -- ++(\ger,-\ger);
  \coordinate (3) at (5*\rb/4,\rv/2); \fill (3) circle (3pt); \draw (3) node[shift=({0.1,-0.25})] {$k$};
  \coordinate (2) at (9*\rb/4,3*\rv/4); \fill (2) circle (3pt); \draw (2) node[above right] {$j$};
  \coordinate (1) at (9*\rb/4,\rv/4); \fill (1) circle (3pt); \draw (1) node[shift=({-0.15,0.3})] {$i$};
  \draw[thick] (3)--(2)--(1)--(3);
  \draw[ultra thin] (2)--(-1r) -- (3) -- (-2r)--(1) (0c)--(3);
  \draw[ultra thin] (0b) .. controls (11*\rb/4,-\rv/2) .. (2) (-3t) .. controls (7*\rb/4,3*\rv/2) .. (2) (-3b) .. controls (3*\rb/2,-\rv/4) .. (1);
  \draw[ultra thin] (0b) .. controls (7*\rb/4,-\rv/2) .. (1);
  \draw (3/2*\ger,-7*\rv/8) node {$s$}; \draw (2*\rb,-3*\rv/4) node {$F$};
\end{scope}
\end{tikzpicture}
\caption{A minimal triangle $(1,2,3)$ supported on the string $s$.}
\label{fig:support1}
\end{figure}
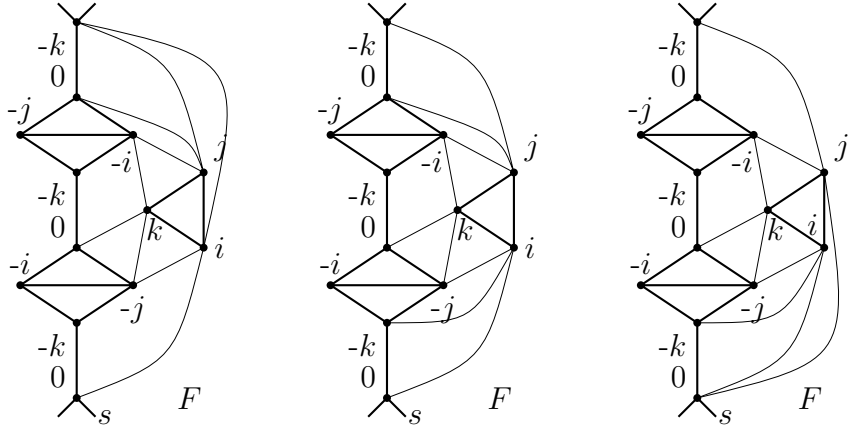

\begin{proof}
   For assertion~\eqref{it:min31} we place the string $s$ vertically and assume (up to reflection in the vertical axis) that the minimal triangle $(1,2,3)$ supported on $s$ lies in the domain $F$ to the right of it, as in Figure~\ref{fig:support1}. All the edges incident to the vertices of the triangle have their endpoints on $s$, and so lie on the path $p$ of $s$ between the bottom and the top vertices of attachment. The vertices $-i$ and $-j$ on $p$ incident to the edges $(k,-i)$ and $(k,-j)$ must be the inner vertices of the beads of $s$, and moreover, the interior of $p$ cannot contain any other inner vertices of the beads of $s$; in particular, these two beads must be consecutive along $s$. Furthermore, both vertices $i$ and $j$ must lie outside the domain bounded by the two edges $(k,-i)$ and $(k,-j)$ and the segment of $p$ between the vertices $-i$ and $-j$ (for otherwise we will not be able to connect both $i$ and $j$ to both $0$ and $-k$ within that domain). Then the path $p$ has to contain at least one vertex $0$ of $s$ lying below both $-i$ and $-j$, and at least one vertex $-k$ of $s$ lying above both $-i$ and $-j$. This forces the position of the edges $(-i,j)$ and $(-j,i)$, and also of the edge $(k,0)$. Then using the fact that $p$ can have no other interior vertices of the beads of $s$ in its interior, it is easy to see that all the other edges except for two are forced. We arrive at one of the three cases shown in Figure~\ref{fig:support1} which only differ by the positions of the edges $(-k,i)$ and $(0,j)$. Note that the proof depends only on the structure of the part of $G'$ lying in (the closure of) $F$ and not on the rest of $G'$.
   
   To prove assertion~\eqref{it:from0to-31} it suffices to consider a set consisting of a single triangle $\triangle$ supported on $s$. Denote $\triangle_{\preceq}$ the set consisting of the triangle $\triangle$ and all the triangles $\triangle' \prec \triangle$. All the triangles in the set $\triangle_{\preceq}$ lie in $F$ and are supported on $s$, and moreover, the top and the bottom attachment points of the set $\triangle_{\preceq}$ are the same as those for the triangle $\triangle$. Let $m$ be the cardinality of the set $\triangle_{\preceq}$. If $m=1$, then $\triangle$ is minimal, and the claim follows from assertion~\eqref{it:min31}. If $m > 1$, choose a minimal triangle $\triangle' \in \triangle_{\preceq}$. Since $\triangle' \prec \triangle$, its top (respectively, bottom) attachment point either coincides or lies below (respectively, above) the top (respectively, the bottom) attachment point of $\triangle$. We now remove from $F$ the minimal triangle $\triangle' \in \triangle_{\preceq}$, together with all the edges connected to its vertices, and replace the part of the string $s$ between the bottom attachment vertex $0$ and the top attachment vertex $-k$ of $\triangle'$ with a single edge $(0,-k)$ (refer to Figure~\ref{fig:support1}). This gives a set $\triangle_{\preceq}$ of cardinality $m-1$, with the same top and bottom points of attachment (note that performing this surgery may violate the property of $G'$ to be a semi-cover, as there may be edges attached to $s$ ``from the other side of $F$"; however, as the proof of assertion~\eqref{it:min31} only depends on the structure of the part of $G'$ lying in $F$ and not on the rest of $G'$, the inductive argument works). 
   
   For assertion~\eqref{it:trap-3top1}, we suppose to the contrary that the vertex $-k$ of the trapezium lies below the vertex $-i$ on the string $s$. Denote $p$ the path of $s$ lying on the boundary of $F$ between these vertices $-i$ and $-k$, and let $p'$ be the one of the two paths $(-i,k,i,-k)$ and $(-i,j,-k)$ lying on the trapezium which lies ``closer" to $s$ (so that the domain $F'$ bounded by $p \cup p'$ contains no vertices of the trapezium inside). First suppose that $p'=(-i,j,-k)$. Then the (interior of the) edge $(j,0')$ lies in $F'$, with $0'$ being an inner vertex of the path $p$. Consider the vertex $v$ of $s$ lying on the boundary of $F$ immediately below the vertex $0'$. It is labelled $-i$ or $-j$, is still an inner vertex of $p$ and is the inner vertex of a bead. Therefore $v$ must be connected to a vertex labelled $k$ lying in the sub-domain of $F'$ bounded by the path $(-k,0')$ of $p$ and the two edges $(j,0')$ and $(j,-k)$. That vertex $k$ belongs to some triangle $(1,2,3)$ lying in $F'$ and supported on $s$. But then the top vertex of attachment of that triangle is either $0'$ or $v \ne -k$, in contradiction with assertion~\eqref{it:from0to-31}. Now suppose that $p'=(-i,k,i,-k)$, and let $v$ be the vertex of $p$ lying immediately above $-k$. Then $v$ is labelled $-i$ or $-j$, is still an inner vertex of $p$ (as $p$ must contain more than one edge) and is the inner vertex of a bead. If $v$ is connected to a vertex of some triangle $(1,2,3)$ other than the triangle belonging to our trapezium, then that triangle is supported on $s$, with the bottom vertex of attachment being $v \ne 0$, which again contradicts assertion~\eqref{it:from0to-31}. So the vertex $v$ has label $-j$ and is connected to the vertices $i$ and $k$ on the path $p'$. But this is again a contradiction, as we cannot connect $i$ to any vertex labelled $0$.
\end{proof}

From Lemma~\ref{l:support1} we obtain the following immediate corollary.

\begin{corollary} \label{c:nonecklace}
   Let $G'$ be a semi-cover satisfying the conditions of Lemma~\ref{l:Hfaces}. 
   \begin{enumerate}[label=\emph{(\alph*)},ref=\alph*]
      \item \label{it:noexts}
        If $G'$ is minimal among all such semi-covers \emph{(}so that $H$ as the cover of $K_4$ has the smallest fold\emph{)} and $s$ is a string shared by an internal face $F$ and the external face $F_e$ of $H$, then no triangle $(1,2,3)$ lying in $F$ can be supported on $s$.

      \item \label{it:noneckl}
        The graph $H$ cannot be a necklace \emph{(}that is, cannot have only one internal, non-triangular face\emph{)}.
   \end{enumerate}
\end{corollary}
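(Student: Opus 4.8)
The plan is to prove~\eqref{it:noexts} by a fold-reducing surgery that contradicts the minimality of $G'$, and then to derive~\eqref{it:noneckl} from it. For~\eqref{it:noexts} I argue by contradiction: suppose some triangle $(1,2,3)$ lying in $F$ is supported on $s$, and choose one, $\triangle$, minimal with respect to $\prec$. By Lemma~\ref{l:support1}\eqref{it:min31} the triangle together with its nine incident edges rests on two consecutive beads of $s$ exactly as in Figure~\ref{fig:support1}, and by Lemma~\ref{l:support1}\eqref{it:from0to-31} its lowest and highest vertices of attachment are a vertex $o$ labelled $0$ and a vertex $w$ labelled $-k$, namely the corners of $s$ bounding these two beads from below and from above. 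Because $s$ is shared with $F_e$, every $0$- and $(-k)$-corner of $s$ lies on the boundary of $F_e$; in particular $o$ and $w$ do, so the semi-cover condition permits them to be deficient.

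I then delete $\triangle$ together with its nine edges and the two beads carrying it, and insert the single edge $(o,w)$. The decisive observation is that the deleted block is attached to the rest of the graph only at $o$ and $w$: the two beads meet the rest of $H$ solely through their connecting edges to $o$ and $w$; their $F_e$-side inner vertices carry no further edges, since $G'$ has nothing in $F_e$; and combining Lemma~\ref{l:support1}\eqref{it:min31} with the fact that all $3$-cycles of $G'$ are facial (Lemma~\ref{l:Hfaces}\eqref{it:s9tri}) forces the faces around $\triangle$ and shows that every surviving vertex of the two beads is joined to a positively labelled vertex only through $\triangle$. Hence removing the block destroys no triangle but $\triangle$, and replacing it by $(o,w)$ --- a legitimate edge of $K_4$ by Lemma~\ref{l:support1}\eqref{it:from0to-31} --- produces a planar graph $G''$ whose distinguished subgraph $H''$ is again a cover of $K_4$ with two fewer vertices of each label $0,-1,-2,-3$. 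Every vertex of $G''$ off the boundary of $F_e$ keeps its $G'$-neighbourhood; only $o$ and $w$ change, each losing its positive neighbours, and this is harmless since $o,w$ lie on the new external boundary. Thus $G''$ is a planar semi-cover satisfying Lemma~\ref{l:Hfaces} with $|V(H'')|<|V(H)|$, contradicting minimality.

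For~\eqref{it:noneckl}, if $H$ were a necklace it would have a single internal non-triangular face $F$, whose entire boundary is the cyclic string and which borders $F_e$ on the other side. By Lemma~\ref{l:Hfaces}\eqref{it:notouter} there is a triangle $(1,2,3)$ in $G'$; it cannot lie in a triangular face of $H$, so it lies in $F$, and all nine of its attachment edges land on $\partial F$, that is, on the string. Taking such a triangle minimal with respect to $\prec$, the construction from the proof of~\eqref{it:noexts} applies unchanged --- it is entirely local, involving only the two beads carrying $\triangle$ and the two corners $o,w$ on the boundary of $F_e$ --- and yields a strictly smaller semi-cover, the required contradiction.

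The step I expect to be the main obstacle is confirming that $G''$ genuinely is a semi-cover, and this is exactly where the hypothesis that $s$ is shared with $F_e$ is indispensable: were $s$ shared by two internal faces, the corners $o$ and $w$ would be interior vertices obliged to keep all six neighbours, so deleting $\triangle$ would break the cover condition at $o$ or $w$. The one delicate point is ruling out that a vertex of the two deleted beads serves a second triangle, which is precisely what Lemma~\ref{l:support1}\eqref{it:min31} and Lemma~\ref{l:Hfaces}\eqref{it:s9tri} secure by pinning down the faces around the minimal triangle. Once $G''$ is known to be a semi-cover, the remaining properties of Lemma~\ref{l:Hfaces} hold for it automatically, so nothing further needs checking.
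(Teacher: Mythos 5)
Your proof of part~\eqref{it:noexts} is essentially the paper's own argument: the surgery you describe (delete a $\prec$-minimal supported triangle together with its nine incident edges and the two beads it rests on, then join the bottom attachment vertex labelled $0$ to the top attachment vertex labelled $-k$ by a new edge) is exactly the removal procedure the paper uses, and your supporting checks --- that the deleted block meets the rest of $G'$ only at $o$ and $w$, and that deficiency at $o,w$ is permissible because they lie on the boundary of $F_e$ --- are the same points the paper asserts. So part~\eqref{it:noexts} is correct and in the same spirit as the paper.

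Part~\eqref{it:noneckl}, as you have written it, has a genuine gap. Assertion~\eqref{it:noneckl} carries \emph{no} minimality hypothesis --- deliberately so, since later in the paper it is invoked for necklaces that are manufactured by surgery inside larger semi-covers (see the proofs of Propositions~\ref{p:twofaces} and~\ref{p:share}), and those necklaces are certainly not minimal. Your argument takes an arbitrary necklace, performs the surgery, and concludes with ``a strictly smaller semi-cover, the required contradiction''; but producing a smaller semi-cover contradicts nothing unless $G'$ itself was assumed minimal, which it is not in~\eqref{it:noneckl}. As written, your argument only shows that a semi-cover which is minimal among \emph{all} semi-covers satisfying Lemma~\ref{l:Hfaces} is not a necklace --- strictly weaker than the claim.

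The missing idea is the one the paper supplies: assume a necklace exists, take a \emph{smallest necklace} (minimality within the class of necklaces), and observe that the surgery output is \emph{again a necklace} --- removing the two beads from the cyclic string and reconnecting the free ends by the new edge $(0,-k)$ leaves a cyclic string of beads, hence a semi-cover with a single internal non-triangular face. This contradicts minimality within the class of necklaces and establishes~\eqref{it:noneckl} unconditionally. You actually have the key fact in hand when you stress that the surgery is ``entirely local''; what is missing is using that locality to conclude that the class of necklaces is closed under the surgery, so that the descent can be run inside that class. (A smaller point in the same direction: on a necklace the boundary of $F$ is a cyclic string rather than a string in the paper's sense, so Lemma~\ref{l:support1} does not apply verbatim; one must note, as the paper implicitly does, that its proof is local and goes through for cyclic strings.)
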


\begin{proof}
  Assertion~\eqref{it:noexts} follows from the argument similar to the one in the proof of assertion~\eqref{it:from0to-31} of Lemma~\ref{l:support1}. Assuming that an internal face $F$ of $H$ and the external face $F_e$ share a string and that a triangle $\triangle$ with the vertices $(1,2,3)$ lies in $F$ and is supported on $s$, we can find a minimal triangle $\triangle' \preceq \triangle$ lying in $F$ and supported on $s$. Then removing from $G'$ the triangle $\triangle'$, together with all the edges connected to its vertices, and replacing the part of the string $s$ between the bottom attachment vertex $0$ and the top attachment vertex $-k$ of $\triangle'$ with a single edge $(0,-k)$ (refer to Figure~\ref{fig:support1}) we arrive at a semi-cover which still satisfies conditions conditions of Lemma~\ref{l:Hfaces} and which has a smaller $H$.
  
  Assertion~\eqref{it:noneckl} follows by the same argument if we start with a smallest necklace and note that the removal procedure described in the previous paragraph results in a semi-cover for which $H$ is still a necklace. 
\end{proof}

\section{The graph $H$ with two internal non-triangular faces}\label{s:2f}

In this section, 
we strengthen  Corollary~\ref{c:nonecklace}\eqref{it:noneckl} to the case of the subgraph $H$ having two internal, non-triangular faces.

\begin{proposition} \label{p:twofaces}
  There is no semi-cover $G'$ satisfying the conditions of Lemma~\ref{l:Hfaces} such that $H$ has exactly two internal, non-triangular faces.
\end{proposition}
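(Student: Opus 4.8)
The plan is to argue by contradiction: assume $H$ has exactly two internal non-triangular faces $F_1$ and $F_2$, and recall that $H$ is minimal (smallest fold among all semi-covers satisfying Lemma~\ref{l:Hfaces}). The strategy mirrors the proof of Corollary~\ref{c:nonecklace}\eqref{it:noneckl}: locate a triangle $(1,2,3)$ supported on a single string, delete it together with its nine incident edges, and collapse the portion of that string it spans to a single edge $(0,-k)$, producing a strictly smaller semi-cover still satisfying Lemma~\ref{l:Hfaces} and contradicting minimality. First I would record the combinatorial skeleton. By Lemma~\ref{l:Hfaces}\eqref{it:2conn}, $H$ is $2$-connected and $3$-regular, and every bead joins two \emph{distinct} non-triangular faces; hence the beads organise into strings, each shared between an unordered pair of the three faces $F_1,F_2,F_e$. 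This gives a connected ``face-adjacency'' structure on $\{F_1,F_2,F_e\}$ whose edges are the strings, and I would separate the strings into those shared with the external face $F_e$ and those shared \emph{between} the two internal faces $F_1$ and $F_2$.

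Next I would prove the auxiliary claim that every triangle $(1,2,3)$ lying in $F_1$ or $F_2$ is supported on a single string. The point is that the nine vertices of attachment of such a triangle cannot straddle a \emph{junction}, i.e.\ a point of a face boundary where one string ends and the next begins: analysing the local picture at a junction, the monotonicity of attachment points given by Lemma~\ref{l:support1}\eqref{it:from0to-31} (top vertex labelled $-k$, bottom vertex labelled $0$), together with the trapezium constraint \eqref{it:trap-3top1}, should be incompatible with a single triangle having attachments on both incident strings. Granting this, $G'$ contains at least one triangle $(1,2,3)$ by Lemma~\ref{l:Hfaces}\eqref{it:notouter}, it lies in $F_1$ or $F_2$ (all positive vertices are interior to these faces), and it is supported on a single string $s$. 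By Corollary~\ref{c:nonecklace}\eqref{it:noexts} this $s$ cannot be shared with $F_e$, so $s$ is necessarily shared between $F_1$ and $F_2$. Thus the whole problem reduces to the one genuinely new configuration: a triangle supported on a string lying between the two internal faces.

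I would then attempt the deletion-and-collapse surgery on such a string $s$, with the supported minimal triangle $\triangle'$ lying (say) in $F_1$, whose local shape is pinned down by Lemma~\ref{l:support1}\eqref{it:min31} and Figure~\ref{fig:support1}. The obstruction absent in the necklace setting is that $s$ now carries edges attached from the $F_2$ side as well: each inner vertex of $s$ on $\partial F_2$ sends two edges into $F_2$, and the shared apexes labelled $0$ and $-k$ may also send edges into $F_2$, so a naive collapse orphans these edges. To handle this I would show, reading \eqref{it:from0to-31} and \eqref{it:trap-3top1} from the $F_2$ side (the direction ``up'' on $s$ being common to both faces), that $\triangle'$ forces a matching minimal triangle $\triangle''$ in $F_2$ occupying the \emph{same} two beads of $s$; deleting both $\triangle'$ and $\triangle''$ and collapsing those two beads to the edge $(0,-k)$ then removes exactly two vertices of each of the seven labels of $K_{1,2,2,2}$, so $H$ remains an honest cover of $K_4$ of strictly smaller fold. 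What must then be verified is that the two vertices of $s$ surviving the collapse still receive their full complement of neighbours, so that the result genuinely satisfies Lemma~\ref{l:Hfaces}.

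The hard part will be exactly this coupling across the shared string, together with the semi-cover bookkeeping at its ends: proving that a triangle supported on $s$ from the $F_1$ side is always matched by a compatibly placed triangle on the $F_2$ side spanning the same beads, and that after deleting both and collapsing, no surviving vertex --- in particular neither endpoint of the collapsed stretch, which sits at a junction with the remaining strings of $H$ --- is left short of a neighbour. This may require locating the endpoints of $s$ relative to $F_e$ and invoking Corollary~\ref{c:nonecklace}\eqref{it:noexts} again at the junctions. A secondary, more routine difficulty is the junction analysis underlying the no-straddling claim, where the bookkeeping of vertex labels around a meeting point of two strings of different types must be carried out carefully and checked against the monotonicity of Lemma~\ref{l:support1}\eqref{it:from0to-31}.
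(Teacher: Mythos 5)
Your reduction rests on the auxiliary claim that every triangle $(1,2,3)$ lying in $F_1$ or $F_2$ has all nine of its vertices of attachment on a \emph{single} string, and this claim is false. The boundary of each internal face consists of parts of two strings together with the junction vertices ($0'$ at the bottom and two vertices of the triangle $(-1',-2',-3')$ at the top, in the notation of Figure~\ref{fig:H2}), and nothing prevents a triangle from attaching to both strings and to a junction vertex simultaneously. In fact the configuration that the paper's own proof shows is \emph{forced} is exactly of this straddling kind: the trapezium $T$ of Figure~\ref{fig:trapat-3} contains a triangle whose attachments include the lowest $-3$ on the shared string $s$, the lowest $-1$ on the string $r$, and the junction vertex $0'$. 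So your monotonicity argument at junctions cannot succeed, and with it falls the conclusion that some triangle is supported on $s$: a priori it is entirely possible (and, in the forced configurations, actually the case) that \emph{no} triangle in $F_1\cup F_2$ is supported on any single string, leaving your surgery with nothing to operate on.

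The second difficulty you flag — the coupling across the shared string needed to make the deletion-and-collapse surgery preserve the cover property — is also unresolved and is not a routine verification; a minimal triangle supported on $s$ from the $F_1$ side gives you no control over where the $F_2$-side edges of those same beads attach, so the existence of a matching triangle $\triangle''$ on the same two beads is not forced by Lemma~\ref{l:support1}. The paper avoids surgery on $s$ altogether: minimality is used only (via the proof of Corollary~\ref{c:nonecklace}\eqref{it:noexts}) to exclude triangles supported on the external strings $l$ and $r$; then Lemma~\ref{l:Hfaces}\eqref{it:s9inn3} and \eqref{it:s9tri} show every triangle in $F_i$ sits inside a trapezium of type $i$ with two negative vertices on the boundary of $F_i$; finally, analysing the trapezium containing the lowest $-3$ on $s$, pinning its position with Lemma~\ref{l:support1}\eqref{it:trap-3top1} and the no-support-on-$r$ property, and tracing its edge $(1,0)$ produces a triangle supported on $s$ whose top attachment violates Lemma~\ref{l:support1}\eqref{it:from0to-31} — a direct contradiction, with no modification of $G'$ needed. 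If you want to keep a surgery-style argument, you would first need a structural substitute for your false no-straddling claim, for instance the paper's trapezium classification, and then a genuinely new argument for the cross-string coupling.
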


\begin{proof}
Arguing by contradiction, we assume that such a semi-cover exists and then choose $G'$ to be minimal among all such semi-covers (so that $H$ as the cover of $K_4$ has the smallest fold).

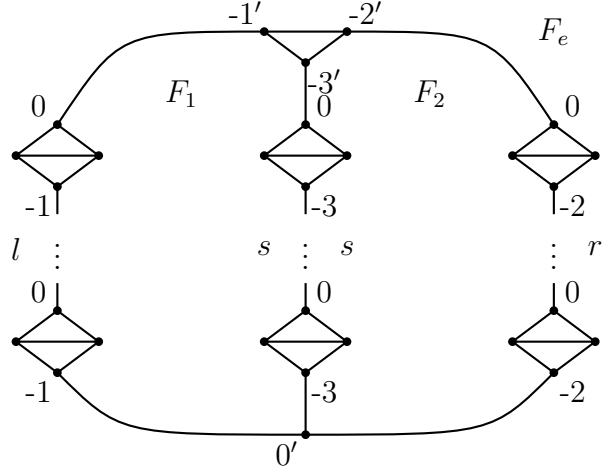
\begin{figure}[ht]
\centering
\begin{tikzpicture}[scale=0.55]
\def \r {6}
\def \rb {1}
\def \rv {3}
\def \rva {4.5}
\def \ger{2/3}
  \coordinate (0b) at (0,-\rv/4); \fill (0b) circle (3pt); \draw (0b) node[shift=({-0.25,-0.25})] {$0'$};
  \coordinate (-3t) at (0,5*\rv/4+\rva); \fill (-3t) circle (3pt); \draw (-3t) node[shift=({0.25,-0.25})] {-$3'$};
  \coordinate (-1t) at (-\rb,3*\rv/2+\rva); \fill (-1t) circle (3pt); \draw (-1t) node[shift=({-0.25,0.25})] {-$1'$};
  \coordinate (-2t) at (\rb,3*\rv/2+\rva); \fill (-2t) circle (3pt); \draw (-2t) node[shift=({0.25,0.25})] {-$2'$};
  \draw[thick] (-3t)--(-2t)--(-1t)--(-3t);
  \foreach \x in {0,1}{
    \coordinate (-1l\x) at (-\r,\rv/4+\x*\rva); \fill (-1l\x) circle (3pt); \draw (-1l\x) node[shift=({-0.25,-0.25})] {-1};
    \coordinate (-2l\x) at (-\r-\rb,\rv/2+\x*\rva); \fill (-2l\x) circle (3pt); 
    \coordinate (-3l\x) at (-\r+\rb,\rv/2+\x*\rva); \fill (-3l\x) circle (3pt); 
    \coordinate (0l\x) at (-\r,3*\rv/4+\x*\rva); \fill (0l\x) circle (3pt); \draw (0l\x) node[shift=({-0.25,0.25})] {0};
    \draw[thick] (-2l\x)--(-1l\x)--(-3l\x)--(0l\x)--(-2l\x)--(-3l\x);
};
  \draw[thick] (0l0) -- ++(0,\ger) (-1l1) -- ++(0,-\ger); \draw (-\r,\rv/2+\rva/2) node {$\vdots$};
  \foreach \x in {0,1}{
    \coordinate (-2r\x) at (\r,\rv/4+\x*\rva); \fill (-2r\x) circle (3pt); \draw (-2r\x) node[shift=({0.25,-0.25})] {-2};
    \coordinate (-1r\x) at (\r+\rb,\rv/2+\x*\rva); \fill (-1r\x) circle (3pt); 
    \coordinate (-3r\x) at (\r-\rb,\rv/2+\x*\rva); \fill (-3r\x) circle (3pt); 
    \coordinate (0r\x) at (\r,3*\rv/4+\x*\rva); \fill (0r\x) circle (3pt); \draw (0r\x) node[shift=({0.25,0.25})] {0};
    \draw[thick] (-1r\x)--(-2r\x)--(-3r\x)--(0r\x)--(-1r\x)--(-3r\x);
};
  \draw[thick] (0r0) -- ++(0,\ger) (-2r1) -- ++(0,-\ger); \draw (\r,\rv/2+\rva/2) node {$\vdots$};
  \foreach \x in {0,1}{
    \coordinate (-3c\x) at (0,\rv/4+\x*\rva); \fill (-3c\x) circle (3pt); \draw (-3c\x) node[shift=({0.25,-0.25})] {-3};
    \coordinate (-1c\x) at (\rb,\rv/2+\x*\rva); \fill (-1c\x) circle (3pt); 
    \coordinate (-2c\x) at (-\rb,\rv/2+\x*\rva); \fill (-2c\x) circle (3pt); 
    \coordinate (0c\x) at (0,3*\rv/4+\x*\rva); \fill (0c\x) circle (3pt); \draw (0c\x) node[shift=({0.25,0.25})] {0};
    \draw[thick] (-1c\x)--(-3c\x)--(-2c\x)--(0c\x)--(-1c\x)--(-2c\x);
};
  \draw[thick] (0c0) -- ++(0,\ger) (-3c1) -- ++(0,-\ger); \draw (0,\rv/2+\rva/2) node {$\vdots$};
  \draw[thick] (0b) -- (-3c0) (0c1) -- (-3t);
  \draw[thick] (0b) .. controls (-3*\r/4,-\rv/4) .. (-1l0) (0b) .. controls (3*\r/4,-\rv/4) .. (-2r0);
  \draw[thick] (-1t) .. controls (-3*\r/4,3*\rv/2+\rva) .. (0l1) (-2t) .. controls (3*\r/4,3*\rv/2+\rva) .. (0r1);
  \draw (-\r/2,\rv+\rva) node {$F_1$}; \draw (\r/2,\rv+\rva) node {$F_2$}; \draw (\r,3*\rv/2+\rva) node {$F_e$};
  \draw (-\r-\rb,\rv/2+\rva/2) node {$l$}; \draw (-\rb,\rv/2+\rva/2) node {$s$};
  \draw (\rb,\rv/2+\rva/2) node {$s$}; \draw (\r+\rb,\rv/2+\rva/2) node {$r$};
\end{tikzpicture}
\caption{$H$ has two internal, non-triangular faces $F_1$ and $F_2$.}
\label{fig:H2}
\end{figure}

Our starting point, the drawing of $H$ (with some labels) is as the one shown in Figure~\ref{fig:H2}. It consists of three strings joining the vertex $0'$ (at the bottom in Figure~\ref{fig:H2}) with the vertices of the triangle $(-1',-2',-3')$ (on the top). We call $l$ (respectively, $r$) the string shared by $F_1$ (respectively, $F_2$) and $F_e$, and we call $s$ the sting shared by $F_1$ and $F_2$. 

We first note that no triangle $(1,2,3)$ lying in the face $F_1$ (respectively, in the face $F_2$) can be supported on the string $l$ (respectively, on the string $r$). This follows from Corollary~\ref{c:nonecklace}\eqref{it:noexts} (more precisely, from its proof, as the removal procedure described there results in a smaller semi-cover still having two internal, non-triangular faces).

Moreover, regardless of the labelling of individual beads in Figure~\ref{fig:H2}, the face $F_1$ lies in the inner domains bounded by a cycle of $H$ with vertices $0,-1,-3$  and by a cycle of $H$ with vertices $0,-1,-2$. By Lemma~\ref{l:Hfaces}\eqref{it:s9inn3}, all the paths $(-3,1,2)$ and $(-2,1,3)$ in the closure of $F_1$ lie on the corresponding triangles. As by Lemma~\ref{l:Hfaces}\eqref{it:s9tri}, the cycles of $G'$ covering the cycle $(1,2,3)$ are also triangles, we obtain that every triangle $(1,2,3)$ lying in $F_1$ is a subgraph of a trapezium of type $1$ (see Figure~\ref{fig:bts}), with both vertices $-2$ and $-3$ lying on the boundary of $F_1$. A similar argument shows that every triangle $(1,2,3)$ lying in $F_2$ is a subgraph of a trapezium of type $2$, with the vertices $-1$ and $-3$ lying on the boundary of $F_2$.

Consider the vertex $-3$ of $s$ lying immediately above the vertex $0'$ (in the notation of Figure~\ref{fig:H2}). Without loss of generality we can assume that it is a vertex of a trapezium $T$ of type $2$ lying in (the closure of) the face $F_2$, with the vertices $-3$ and $-1$ lying on the boundary of $F_2$. From Lemma~\ref{l:support1}\eqref{it:trap-3top1} it follows that the vertex $-1$ of $T$ lies on the string $r$. If there is at least one inner vertex of a bead in the interior of the path of $r$ lying on the boundary of $F_2$ and joining $0'$ to the vertex $-1$ of $T$, then that vertex is connected to vertices labelled $2$ and $i \in \{1,3\}$, of which at least one does not belong to $T$, and hence lies on a triangle $(1,2,3)$ which then must be supported on $r$, a contradiction. It follows that the trapezium $T$ is positioned as shown in Figure~\ref{fig:trapat-3}, with the edge $(2,0')$ forced (clearly the path $(-3,1,3,-1)$ of the trapezium cannot lie below the path $(-3,2,-1)$ for we need to connect each of the vertices $1, 3$ to each of the vertices $0', -2$).

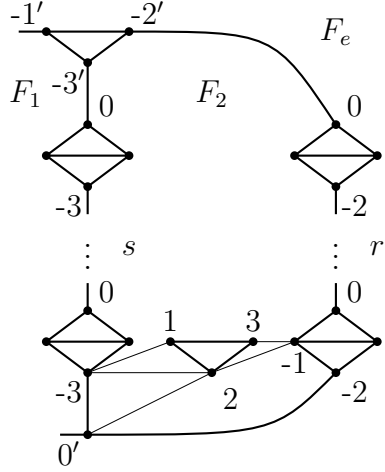
\begin{figure}[ht]
\centering
\begin{tikzpicture}[scale=0.55]
\def \r {6}
\def \rb {1}
\def \rv {3}
\def \rva {4.5}
\def \ger{2/3}
  \coordinate (0b) at (0,-\rv/4); \fill (0b) circle (3pt); \draw (0b) node[shift=({-0.25,-0.25})] {$0'$};
  \coordinate (-3t) at (0,5*\rv/4+\rva); \fill (-3t) circle (3pt); \draw (-3t) node[shift=({-0.25,-0.25})] {-$3'$};
  \coordinate (-1t) at (-\rb,3*\rv/2+\rva); \fill (-1t) circle (3pt); \draw (-1t) node[shift=({-0.25,0.25})] {-$1'$};
  \coordinate (-2t) at (\rb,3*\rv/2+\rva); \fill (-2t) circle (3pt); \draw (-2t) node[shift=({0.25,0.25})] {-$2'$};
  \draw[thick] (-3t)--(-2t)--(-1t)--(-3t);
  \draw[thick] (0b) -- ++(-\ger,0) (-1t) -- ++(-\ger,0);

  \foreach \x in {0,1}{
    \coordinate (-2r\x) at (\r,\rv/4+\x*\rva); \fill (-2r\x) circle (3pt); \draw (-2r\x) node[shift=({0.25,-0.25})] {-2};
    \coordinate (-1r\x) at (\r+\rb,\rv/2+\x*\rva); \fill (-1r\x) circle (3pt); 
    \coordinate (-3r\x) at (\r-\rb,\rv/2+\x*\rva); \fill (-3r\x) circle (3pt); 
    \coordinate (0r\x) at (\r,3*\rv/4+\x*\rva); \fill (0r\x) circle (3pt); \draw (0r\x) node[shift=({0.25,0.25})] {0};
    \draw[thick] (-1r\x)--(-2r\x)--(-3r\x)--(0r\x)--(-1r\x)--(-3r\x);
};
  \draw[thick] (0r0) -- ++(0,\ger) (-2r1) -- ++(0,-\ger); \draw (\r,\rv/2+\rva/2) node {$\vdots$};
  \draw (-3r0) node[below] {-1};
  \foreach \x in {0,1}{
    \coordinate (-3c\x) at (0,\rv/4+\x*\rva); \fill (-3c\x) circle (3pt); \draw (-3c\x) node[shift=({-0.25,-0.25})] {-3};
    \coordinate (-1c\x) at (\rb,\rv/2+\x*\rva); \fill (-1c\x) circle (3pt); 
    \coordinate (-2c\x) at (-\rb,\rv/2+\x*\rva); \fill (-2c\x) circle (3pt); 
    \coordinate (0c\x) at (0,3*\rv/4+\x*\rva); \fill (0c\x) circle (3pt); \draw (0c\x) node[shift=({0.25,0.25})] {0};
    \draw[thick] (-1c\x)--(-3c\x)--(-2c\x)--(0c\x)--(-1c\x)--(-2c\x);
};
  \draw[thick] (0c0) -- ++(0,\ger) (-3c1) -- ++(0,-\ger); \draw (0,\rv/2+\rva/2) node {$\vdots$};
  \draw[thick] (0b) -- (-3c0) (0c1) -- (-3t);
  \draw[thick] (0b) .. controls (3*\r/4,-\rv/4) .. (-2r0);
  \draw[thick] (-2t) .. controls (3*\r/4,3*\rv/2+\rva) .. (0r1);
  \draw (-\r/4,\rv+\rva) node {$F_1$}; \draw (\r/2,\rv+\rva) node {$F_2$}; \draw (\r,3*\rv/2+\rva) node {$F_e$};
  \draw (\rb,\rv/2+\rva/2) node {$s$}; \draw (\r+\rb,\rv/2+\rva/2) node {$r$};
    \coordinate (1) at (\r/2-\rb,\rv/2); \fill (1) circle (3pt); \draw (1) node[above] {1};
    \coordinate (3) at (\r/2+\rb,\rv/2); \fill (3) circle (3pt); \draw (3) node[above] {3};
    \coordinate (2) at (\r/2,\rv/4); \fill (2) circle (3pt); \draw (2) node[shift=({0.25,-0.35})] {2};
  \draw[thick] (1) -- (2) -- (3) -- (1);
  \draw[ultra thin] (1)--(-3c0)--(2)--(-3r0)--(3) (2)--(0b);
\end{tikzpicture}
\caption{The trapezium lying in $F_2$.}
\label{fig:trapat-3}
\end{figure}

Consider the edge $(1,0)$. First suppose that its endpoint $0$ lies on the string $r$. This cannot be the bottom vertex $0$ on $r$ lying above the vertex $-1$ of $T$, as we need an edge $(3,-2)$. If this is any other vertex labelled $0$ of $r$, then the path of $r$ between it and the vertex $-1$ of $T$ contains at least one inner vertex of a bead. That vertex must be connected to a vertex labelled $2$ which then is one of the vertices of a triangle $(1,2,3)$ supported on $r$, which is again a contradiction. Thus the endpoint $0$ of the edge $(1,0)$ lies on $c$. Let $v$ be the vertex of $c$ on the boundary of $F_2$ lying immediately below that vertex $0$. Then $v$ is connected to a vertex labelled $3$ which is not a vertex of $T$. Then this vertex $3$ lies on a triangle $(1,2,3)$ which is supported on $s$ and whose top vertex of attachment to $s$ is either $0$ or $v \ne -3$ which contradicts Lemma~\ref{l:support1}\eqref{it:from0to-31}. 
\end{proof}


Next we establish the following important property of the subgraph $H$. 

\begin{proposition} \label{p:share}
  There is no semi-cover $G'$ satisfying the conditions of Lemma~\ref{l:Hfaces}
  in which two internal faces of $H$ of length at most $3m$ each \emph{(}$m \ge 3$\emph{)} share $m-2$ beads.
\end{proposition}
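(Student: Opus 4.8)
The plan is to argue by contradiction. First I would choose $G'$ to be minimal (smallest fold of $H$ over $K_4$) among all semi-covers satisfying Lemma~\ref{l:Hfaces} in which two internal faces $F_1,F_2$ of length at most $3m$ share a string $s$ with $m-2$ beads. The opening move is to pin down the global shape. Since the boundary of a $3p$-gonal face has the cyclic label pattern of Lemma~\ref{l:Hfaces}\eqref{it:mult3}, it meets at most $p$ beads (one per $0$-vertex on it), so each of $F_1,F_2$ meets at most $m$ beads; as $m-2$ of them lie on $s$, the part of $\partial F_1$ (respectively $\partial F_2$) off $s$ --- call it the cap --- contains at most two beads. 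Thus both faces are thin strips bounded by the long common string $s$ (say of type $-k$) and a cap of at most two beads, and I orient $s$ so that its endpoints are labelled $0$ at the bottom and $-k$ at the top, as required by Lemma~\ref{l:support1}.

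Next, exactly as in the first half of the proof of Proposition~\ref{p:twofaces}, I would use Lemma~\ref{l:Hfaces}\eqref{it:s9inn3} and \eqref{it:s9tri} to show that every triangle $(1,2,3)$ lying in $F_1$ is a subgraph of a trapezium whose type is forced by the two string types bounding $F_1$, with two of its negative vertices on $\partial F_1$; the same holds for $F_2$. In particular the inner vertex of the lowest bead of $s$ on the $F_1$-side has both of its face-edges going into $F_1$ and hence belongs to such a trapezium $T_1\subset F_1$, and symmetrically on the $F_2$-side. Applying Lemma~\ref{l:support1}\eqref{it:trap-3top1} to $T_1$ (the $-k$ vertex of a trapezium lies above its $-i$ vertex on $s$) then locates the second negative vertex of $T_1$ and shows it cannot drop below the bottom of $s$.

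The heart of the matter then mirrors the final paragraph of Proposition~\ref{p:twofaces}, run at the bottom end of $s$. By Lemma~\ref{l:support1}\eqref{it:from0to-31} the bottom attachment of $T_1$ must be the endpoint $0$ of $s$, which fixes the position of the edge from the apex of $T_1$ to a vertex labelled $0$. I would then trace this edge: it cannot land on the cap --- either because the cap is shared with the external face $F_e$, whence Corollary~\ref{c:nonecklace}\eqref{it:noexts} forbids a triangle supported there, or, failing that, because a cap of at most two beads cannot accommodate the resulting attachments consistently with the orientation in Lemma~\ref{l:support1}\eqref{it:trap-3top1} --- so it lands on $s$. The vertex of $s$ immediately below that $0$ is then the inner vertex of a bead forced to connect to a vertex of a further triangle supported on $s$ whose top attachment is not $-k$, contradicting Lemma~\ref{l:support1}\eqref{it:from0to-31}. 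Since $F_1$ and $F_2$ lie on opposite sides of the same string $s$, symmetry lets me run this analysis at whichever end and on whichever side is convenient.

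The hard part will be the second step (the trapezium analysis) together with the treatment of the caps. Unlike Proposition~\ref{p:twofaces}, where the outer strings $l,r$ were shared with $F_e$ and so carried no supported triangle by Corollary~\ref{c:nonecklace}\eqref{it:noexts}, here each cap may be shared with another internal face, so that corollary is not available off the shelf. The delicate point is therefore to rule out triangles whose attachment vertices are split between $s$ and a two-bead cap; I expect to dispatch this by a short case analysis over the (at most two) labellings of the cap beads, using the orientation constraints of Lemma~\ref{l:support1}\eqref{it:from0to-31} and \eqref{it:trap-3top1}, and, whenever a cap is shared with $F_e$, the removal surgery of Corollary~\ref{c:nonecklace}. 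Verifying that these local configurations at the two ends of $s$ really exhaust all cap labellings is where the genuine work lies.
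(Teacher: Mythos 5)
Your plan hinges on the claim that every triangle $(1,2,3)$ lying in $F_1$ or $F_2$ sits inside a trapezium to which Lemma~\ref{l:support1}\eqref{it:trap-3top1} can be applied, and this is precisely where the argument breaks. Whether that claim holds depends on the labels at the two ends of the shared string, and the possible labellings split into genuinely non-equivalent cases (so your appeal to ``symmetry'' at whichever end is convenient is not available). Say the string has type $-3$, with end vertices $0'$ and $-3'$, and let $u_1,u_2$ be the two remaining vertices of the triangle at $-3'$. If $u_1=-2,\,u_2=-1$, then what Remark~\ref{rem:maxpath} gives you is that the paths $(-1,2,3)$ and $(-2,1,3)$ close up into triangles, so each triangle $(1,2,3)$ lies in a trapezium of type $3$ --- whose negative vertices are $-1$ and $-2$, not $-3$. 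Lemma~\ref{l:support1}\eqref{it:trap-3top1} requires the trapezium to have a vertex carrying the string's own label $-k$ on $s$, so it never applies here; moreover in this case the lifts of $(-3,1,2)$ need not be triangles at all, since the boundary of $F_1\cup F_2$ contains many vertices labelled $-3$ at which such a path may terminate. The paper disposes of these configurations (and of the analogous subcase with $u_1=-1,\,u_2=-2$ where some path $(-3,1,2)$ fails to close) not by local orientation constraints but by counting the triangles $a_i$ against the beads $b_i$ and then performing cut-and-paste surgeries that produce a necklace, contradicting Corollary~\ref{c:nonecklace}\eqref{it:noneckl}. Your proposal contains no substitute for this counting-plus-surgery machinery; the endgame you describe is essentially the paper's argument for the one remaining subcase ($u_1=-1,\,u_2=-2$ with all $(-3,1,2)$ paths closed), where the trapezia really are of type $1$ and $2$ and do contain a $-3$ on the string.

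Two further gaps. First, the proposition only assumes the faces \emph{share $m-2$ beads}; that these beads lie on a single string is the opening step of the paper's proof (pass to the quotient $H'$ obtained by contracting the triangles $(-1,-2,-3)$, where beads become double edges of a cubic bipartite planar graph, and count the boundary vertices not incident to shared double edges). You assume this outright. Second, even in the favourable subcase, your use of Lemma~\ref{l:support1}\eqref{it:from0to-31} is misdirected: that lemma concerns triangles \emph{supported on} $s$, i.e.\ with all nine attachment vertices on $s$, which the triangle of a trapezium generally is not (two of its attachments are the trapezium's negative vertices). In the paper the lemma is applied to an auxiliary triangle that is forced to be supported on $s$ by the bead vertex lying immediately below a certain $0$ on the string, not to the trapezium itself; reproducing that step requires first forcing the position of the edge $(2,0')$ and ruling out attachment to $0_2$, which your sketch does not do.
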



For example, when $m=3$, we obtain that two $9$-faces of $H$ cannot share a bead. Note that a face of length $3l, \, l \ge 3$, cannot have more than $l$ beads on its boundary, and if it has exactly $l$, we obtain a necklace which is not possible by Corollary~\ref{c:nonecklace}\eqref{it:noneckl}. So the proposition effectively applies only when both faces are of length $3m$, or when one is of length $3m$, and another one, of length $3(m-1)$.

During the proof we will use the following easy fact which, in the setting of Proposition~\ref{p:share}, will strengthen Lemma~\ref{l:Hfaces}\eqref{it:s9inn3}.

\begin{remark} \label{rem:maxpath} 
%
%
  For $\{i,j,k\}=\{1,2,3\}$, consider the components of the lift of  $(-i,j,k)$ that are paths in $G'$. By Lemma~\ref{l:Hfaces}\eqref{it:s9inn3} these paths start and end on the boundary of $G'$, so their endpoints are distinct vertices with label $-i$. In particular, if the boundary of $G'$ has one or zero vertices labelled $-i$, there are no such paths, and the lift consists of only triangular faces.
\end{remark}

\begin{proof}[Proof of Proposition \ref{p:share}]
  Suppose to the contrary that two faces $F_1$ and $F_2$ of $H$ of lengths $3l_1$ and $3l_2$ respectively, where $l_1, l_2 \le m$, share $m-2$ beads. We can assume that $l_1,l_2 \in \{m-1,m\}$. First prove that all the beads shared by $F_1$ and $F_2$ lie on a single string. The argument is easier seen if we consider the quotient $H'$ of the graph $H$ obtained by contracting every triangle $(-1,-2,-3)$ to a single vertex. The resulting graph is again planar, and is cubic and bipartite, with beads projecting to double edges. For $i=1,2$, the face $F_i$ of $H$ projects to a face $F_i'$ of $H'$ of length $2l_i$. Then the faces $F_1'$ and $F_2'$ share $m-2$ double edges. Note that two pairs of double edges are vertex-disjoint (for there are no triple edges and the graph $H'$ is cubic), so on the boundary of $F'_i, \, i=1,2$, there are $2l_i-2(m-2) \in \{2,4\}$ vertices not incident to any double edges shared by $F_1'$ and $F_2'$. Each of these vertices has a single edge incident to it and going to the outside of $F_i'$. Moreover, no such edge may border the other face $F'_{3-i}$ on both of its sides (as $H$ and hence $H'$ are at least $2$-edge connected). Now if $l_i=m-1$, we have exactly two such edges, and then they must be incident to the two consecutive vertices of $F_i'$ which implies that $F_1'$ and $F_2'$ share a path of length $2l_i-1=2m-3$ having $m-2$ pairs of double edges (which lifts to a string of $m-2$ beads shared by $F_1$ and $F_2$ in $H$). If $l_1=l_2=m$, then we have four such edges incident to four vertices on the boundary of $F_1'$. If these vertices are consecutive, we arrive at the same conclusion as above, and in all the other cases, the face $F_2'$ appears to be longer than $2m$.

  We first suppose that both faces are of length $3m$. Then our starting configuration, with some labelling, is as in Figure~\ref{fig:sharem-2}, with $\{u_1, u_2\} = \{-1,-2\}$ and with $\{v_1, v_2\} \subset \{-1,-2,-3\}$ such that $v_1 \ne -1, u_1$ and $v_2 \ne -2, u_2$.

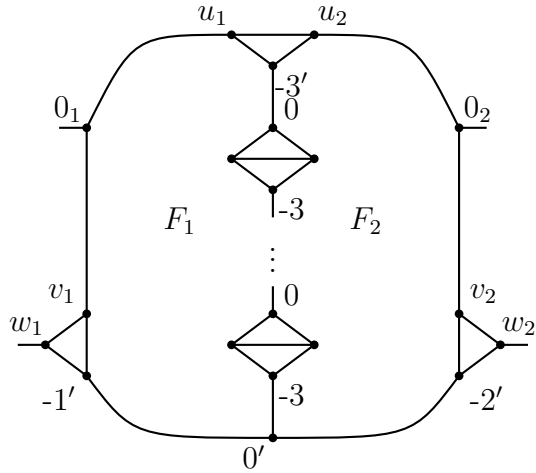
\begin{figure}[ht]
\centering
\begin{tikzpicture}[scale=0.55]
\def \r {4.5}
\def \rb {1}
\def \rv {3}
\def \rva {4.5}
\def \ger{2/3}
  \coordinate (0b) at (0,-\rv/4); \fill (0b) circle (3pt); \draw (0b) node[shift=({-0.25,-0.25})] {$0'$};
  \coordinate (-3t) at (0,5*\rv/4+\rva); \fill (-3t) circle (3pt); \draw (-3t) node[shift=({0.25,-0.25})] {-$3'$};
  \coordinate (-1t) at (-\rb,3*\rv/2+\rva); \fill (-1t) circle (3pt); \draw (-1t) node[shift=({-0.25,0.25})] {$u_1$}; 
  \coordinate (-2t) at (\rb,3*\rv/2+\rva); \fill (-2t) circle (3pt); \draw (-2t) node[shift=({0.25,0.25})] {$u_2$};
  \draw[thick] (-3t)--(-2t)--(-1t)--(-3t);
  \coordinate (-1l) at (-\r,\rv/4); \fill (-1l) circle (3pt); \draw (-1l) node[below left] {-$1'$}; 
  \coordinate (-2l) at (-\r-\rb,\rv/2); \fill (-2l) circle (3pt); \draw (-2l) node[shift=({-0.25,0.25})] {$w_1$};
  \coordinate (-3l) at (-\r,3*\rv/4); \fill (-3l) circle (3pt); \draw (-3l) node[above left] {$v_1$};
  \coordinate (0l) at (-\r,3*\rv/4+\rva); \fill (0l) circle (3pt); \draw (0l) node[shift=({-0.25,0.25})] {$0_1$};
  \draw[thick] (-3l)--(-2l)--(-1l)--(-3l)--(0l);
  \draw[thick] (0l) -- ++(-\ger,0) (-2l) -- ++(-\ger,0);
  \coordinate (-2r) at (\r,\rv/4); \fill (-2r) circle (3pt); \draw (-2r) node[below right] {-$2'$}; 
  \coordinate (-1r) at (\r+\rb,\rv/2); \fill (-1r) circle (3pt); \draw (-1r) node[shift=({0.25,0.25})] {$w_2$};
  \coordinate (-3r) at (\r,3*\rv/4); \fill (-3r) circle (3pt); \draw (-3r) node[above right] {$v_2$};
  \coordinate (0r) at (\r,3*\rv/4+\rva); \fill (0r) circle (3pt); \draw (0r) node[shift=({0.25,0.25})] {$0_2$};
  \draw[thick] (-3r)--(-2r)--(-1r)--(-3r)--(0r);
  \draw[thick] (0r) -- ++(\ger,0) (-1r) -- ++(\ger,0);
  \foreach \x in {0,1}{
    \coordinate (-3c\x) at (0,\rv/4+\x*\rva); \fill (-3c\x) circle (3pt); \draw (-3c\x) node[shift=({0.25,-0.25})] {-3};
    \coordinate (-1c\x) at (\rb,\rv/2+\x*\rva); \fill (-1c\x) circle (3pt); 
    \coordinate (-2c\x) at (-\rb,\rv/2+\x*\rva); \fill (-2c\x) circle (3pt); 
    \coordinate (0c\x) at (0,3*\rv/4+\x*\rva); \fill (0c\x) circle (3pt); \draw (0c\x) node[shift=({0.25,0.25})] {0};
    \draw[thick] (-1c\x)--(-3c\x)--(-2c\x)--(0c\x)--(-1c\x)--(-2c\x);
};
  \draw[thick] (0c0) -- ++(0,\ger) (-3c1) -- ++(0,-\ger); \draw (0,\rv/2+\rva/2) node {$\vdots$};
  \draw[thick] (0b) -- (-3c0) (0c1) -- (-3t);
  \draw[thick] (0b) .. controls (-3*\r/4,-\rv/4) .. (-1l) (0b) .. controls (3*\r/4,-\rv/4) .. (-2r);
  \draw[thick] (-1t) .. controls (-3*\r/4,3*\rv/2+\rva) .. (0l) (-2t) .. controls (3*\r/4,3*\rv/2+\rva) .. (0r);
  \draw (-\r/2,\rva) node {$F_1$}; \draw (\r/2,\rva) node {$F_2$}; 
\end{tikzpicture}
\caption{Two $3m$-faces $F_1$ and $F_2$ of $H$ which share a string of $m-2$ beads.}
\label{fig:sharem-2}
\end{figure}

  Let $a_i$ be the number triangles $(1,2,3)$ lying in the face $F_i, \, i=1,2$, and let $b_1$ (respectively, $b_2$) be the number of beads lying on the string which have vertex $-1$ (respectively, $-2$) on the boundary of $F_1$. Clearly, $b_1+b_2=m-2$. There are $m-1$ vertices labelled $-3$ on the string, which gives $a_1+a_2 \ge m-1$.

  We consider two cases.

  First suppose that $u_1=-2,\, u_2=-1$. Then $v_1=v_2=-3$ and each of the faces $F_1, F_2$ has a single vertex labelled $-1$ and a single vertex labelled $-2$ on its boundary which is not an inner vertex of a bead. By Remark~\ref{rem:maxpath} we obtain that all the paths $(-1,2,3)$ and $(-2,1,3)$ lying in the closure of $F_1 \cup F_2$ lie on the corresponding triangles. In particular, for each of the vertices $-1', u_1$ (respectively, $-2', u_2$), the pair of edges which connect them to the vertices of the triangles $(1,2,3)$ lies either entirely inside or entirely outside the domain $F_1$ (respectively,~$F_2$).

  Suppose that both pairs of edges $(-1',2), (-1',3)$ and $(u_1,1), (u_1,3)$ go in $F_1$. We can then construct a necklace as follows: we retain the string, the part of $G'$ lying in $F_1$, the triangles $(-3',u_1,u_2)$ and $(-1',v_1,w_1)$ (recall that $u_1=w_1=-2,\, u_2=-1,\, v_1=-3$), draw the new edges $(0',w_1)$ and $(0_1,u_2)$, and remove everything else from $G'$. This gives the drawing as on the left in Figure~\ref{fig:bothin} (with $u_1=-2,\, u_2=-1$) leading to a contradiction with Corollary~\ref{c:nonecklace}\eqref{it:noneckl}. Obviously, a similar argument applies when both pairs of edges $(-2',1), (-2',3)$ and $(u_2,2), (u_2,3)$ go in $F_2$.

  Next suppose that both pairs of edges $(-1',2), (-1',3)$ and $(u_1,1), (u_1,3)$ go out of $F_1$. Counting the edges $(-1,3)$ and $(-2,3)$ in $F_1$ we obtain $a_1=b_1=b_2$, and so $m=b_1+b_2=2a_1$. Counting the edges $(-3,1)$ incident to the vertices $-3$ lying on the string we obtain $a_1+a_2 \ge m+1$ which gives $a_2 \ge a_1+1$. Then counting the edges $(-1,3)$ and $(-2,3)$ in $F_2$ we obtain that $a_2=a_1+1$ and that both pairs of edges $(-2',1), (-2',3)$ and $(u_2,2), (u_2,3)$ must go in $F_2$ which reduces this case to the one above.

Now suppose that the edges $(u_1,1),
(u_1,3)$ go in $F_1$, and the edges $(-1',2)$, $(-1',3)$ go out. The count similar to the above gives $a_1=b_1=b_2+1, \; m= 2a_1-1$. There are $m+1=2a_1$ vertices labelled $-3$ on the string; counting the edges $(-3,1)$ incident to them we get $a_2 \ge a_1$. Moreover, there are $b_2+1=a_1$ vertices labelled $-1$ on the boundary of $F_2$ which gives $a_1 \ge a_2$. We deduce that $a_1=a_2$, and hence all the edges $(-3,1)$ and $(-3,2)$ lying in $F_1 \cup F_2$ are incident to the vertices $-3$ on the string. Then both edges $(v_1,1)$ and $(v_1,2)$ go out of $F_1$ (recall that $v_1=-3$). Furthermore, we have $m=2a_1$ vertices labelled $0$ on the string, not counting the vertex $0'$. As there are $2a_1$ triangles $(1,2,3)$ lying in $F_1 \cup F_2$ in total, no two out of the three vertices $0', 0_1$ and $0_2$ can be connected to vertices lying in $F_1 \cup F_2$ and having the same label $i \in \{1,2,3\}$. In particular, the set of labels $i \in \{1,2,3\}$ such that the edge $(0',i)$ lies in $F_1$ and the set of labels $j \in \{1,2,3\}$ such that the edge $(0_1,j)$ lies in $F_1$ are disjoint. We can now construct a necklace as follows: we retain the string, the part of $G'$ lying in $F_1$, the triangles $(-3',u_1,u_2)$, remove the the triangle $(-1',v_1,w_1)$, identify $0_1$ with $0'$ and draw the new edge $(0_1,u_2)$, and then remove everything else from $G'$. The surgery is showing in the middle and on the right in Figure~\ref{fig:bothin} (in the drawing, in the middle, each vertex $0', 0_1$ has one edge lying in $F_1$, $(0',i)$ and $(0_1,j)$ respectively; there can be more of such edges or there can be none, but in any case, all the endpoints of these edges have different labels). We arrive at a contradiction with Corollary~\ref{c:nonecklace}\eqref{it:noneckl}.

\begin{figure}[ht]
\centering
\begin{tikzpicture}[scale=0.5]
\def \r {4.5}
\def \rb {1}
\def \rv {3}
\def \rva {4.5}
\def \ger{1} 
\begin{scope}
  \coordinate (0b) at (0,-\rv/4); \fill (0b) circle (3pt); \draw (0b) node[shift=({-0.25,-0.25})] {$0'$};
  \coordinate (-3t) at (0,5*\rv/4+\rva); \fill (-3t) circle (3pt); \draw (-3t) node[shift=({0.25,-0.25})] {-$3'$};
  \coordinate (-1t) at (-\rb,3*\rv/2+\rva); \fill (-1t) circle (3pt); \draw (-1t) node[shift=({-0.4,-0.25})] {$u_1$}; 
  \coordinate (-2t) at (\rb,3*\rv/2+\rva); \fill (-2t) circle (3pt); \draw (-2t) node[shift=({0,-0.35})] {$u_2$};
  \draw[thick] (-3t)--(-2t)--(-1t)--(-3t);
  \draw[ultra thin] (-1t) -- ++(\ger/3,-{sqrt(8)/3*\ger}) (-1t) -- ++(-\ger/3,-{sqrt(8)/3*\ger});
  \coordinate (-1l) at (-\r,\rv/4); \fill (-1l) circle (3pt); \draw (-1l) node[shift=({0.35,0})] {-$1'$}; 
  \coordinate (-2l) at (-\r-\rb,\rv/2); \fill (-2l) circle (3pt); \draw (-2l) node[shift=({0,0.35})] {$-2$};
  \coordinate (-3l) at (-\r,3*\rv/4); \fill (-3l) circle (3pt); \draw (-3l) node[right] {-$3$};
  \coordinate (0l) at (-\r,3*\rv/4+\rva); \fill (0l) circle (3pt); \draw (0l) node[shift=({-0.25,0.25})] {$0_1$};
  \draw[thick] (-3l)--(-2l)--(-1l)--(-3l)--(0l);
  \draw[ultra thin] (-1l) -- ++({sqrt(3)/2*\ger},\ger/2) (-1l) -- ++(\ger/2,{sqrt(3)/2*\ger});
  \foreach \x in {0,1}{
    \coordinate (-3c\x) at (0,\rv/4+\x*\rva); \fill (-3c\x) circle (3pt); \draw (-3c\x) node[shift=({0.25,-0.25})] {-3};
    \coordinate (-1c\x) at (\rb,\rv/2+\x*\rva); \fill (-1c\x) circle (3pt); 
    \coordinate (-2c\x) at (-\rb,\rv/2+\x*\rva); \fill (-2c\x) circle (3pt); 
    \coordinate (0c\x) at (0,3*\rv/4+\x*\rva); \fill (0c\x) circle (3pt); \draw (0c\x) node[shift=({0.25,0.25})] {0};
    \draw[thick] (-1c\x)--(-3c\x)--(-2c\x)--(0c\x)--(-1c\x)--(-2c\x);
};
  \draw[thick] (0c0) -- ++(0,\ger) (-3c1) -- ++(0,-\ger); \draw (0,\rv/2+\rva/2) node {$\vdots$};
  \draw[thick] (0b) -- (-3c0) (0c1) -- (-3t);
  \draw[thick] (0b) .. controls (-3*\r/4,-\rv/4) .. (-1l);
  \draw[thick] (-1t) .. controls (-3*\r/4,3*\rv/2+\rva) .. (0l);
  \draw[thick, dashed](-2t) .. controls (-\r,7*\rv/4+\rva) .. (0l) (0b) .. controls (-\r,-3*\rv/8) .. (-2l);
  \draw (-\r/2,\rva) node {$F_1$}; 
\end{scope}
\begin{scope}[shift={(1.8*\r,0)}]
  \coordinate (0b) at (0,-\rv/4); \fill (0b) circle (3pt); \draw (0b) node[shift=({-0.25,-0.25})] {$0'$};
  \coordinate (-3t) at (0,5*\rv/4+\rva); \fill (-3t) circle (3pt); \draw (-3t) node[shift=({0.25,-0.25})] {-$3'$};
  \coordinate (-1t) at (-\rb,3*\rv/2+\rva); \fill (-1t) circle (3pt); \draw (-1t) node[shift=({-0.4,-0.25})] {-$2$}; %
  \coordinate (-2t) at (\rb,3*\rv/2+\rva); \fill (-2t) circle (3pt); \draw (-2t) node[shift=({0,-0.35})] {-$1$};%
  \draw[thick] (-3t)--(-2t)--(-1t)--(-3t);
  \draw[ultra thin] (-1t) -- ++(\ger/3,-{sqrt(8)/3*\ger}) (-1t) -- ++(-\ger/3,-{sqrt(8)/3*\ger});
  \coordinate (-1l) at (-\r,\rv/4); \fill (-1l) circle (3pt); \draw (-1l) node[shift=({0.35,0})] {-$1'$}; 
  \coordinate (-2l) at (-\r-\rb,\rv/2); \fill (-2l) circle (3pt); \draw (-2l) node[shift=({0,0.35})] {$-2$};
  \coordinate (-3l) at (-\r,3*\rv/4); \fill (-3l) circle (3pt); \draw (-3l) node[right] {-$3$};
  \coordinate (0l) at (-\r,3*\rv/4+\rva); \fill (0l) circle (3pt); \draw (0l) node[shift=({-0.25,0.25})] {$0_1$};
  \draw[thick] (-3l)--(-2l)--(-1l)--(-3l)--(0l);
  \draw[thick] (0l) -- ++(-\ger,0) (-2l) -- ++(-\ger/3,0);
  \draw[ultra thin] (-1l) -- ++({-sqrt(3)/2*\ger},-\ger/2) (-1l) -- ++(-\ger/2,{-sqrt(3)/2*\ger});
  \draw[ultra thin] (-3l) -- ++({-sqrt(3)/2*\ger},\ger/2) (-3l) -- ++(-\ger/2,{sqrt(3)/2*\ger});
  \coordinate (i) at (-\r/2,\rva/2); \fill (i) circle (3pt); \draw (i) node[right] {$i$};
  \coordinate (j) at (-\r/2,\rva); \fill (j) circle (3pt); \draw (j) node[below] {$j$};
  \draw[ultra thin] (i) -- (0b) (j) -- (0l);
  \foreach \x in {0,1}{
    \coordinate (-3c\x) at (0,\rv/4+\x*\rva); \fill (-3c\x) circle (3pt); \draw (-3c\x) node[shift=({0.25,-0.25})] {-3};
    \coordinate (-1c\x) at (\rb,\rv/2+\x*\rva); \fill (-1c\x) circle (3pt); 
    \coordinate (-2c\x) at (-\rb,\rv/2+\x*\rva); \fill (-2c\x) circle (3pt); 
    \coordinate (0c\x) at (0,3*\rv/4+\x*\rva); \fill (0c\x) circle (3pt); \draw (0c\x) node[shift=({0.25,0.25})] {0};
    \draw[thick] (-1c\x)--(-3c\x)--(-2c\x)--(0c\x)--(-1c\x)--(-2c\x);
};
  \draw[thick] (0c0) -- ++(0,\ger) (-3c1) -- ++(0,-\ger); \draw (0,\rv/2+\rva/2) node {$\vdots$};
  \draw[thick] (0b) -- (-3c0) (0c1) -- (-3t);
  \draw[thick] (0b) .. controls (-3*\r/4,-\rv/4) .. (-1l);
  \draw[thick] (-1t) .. controls (-3*\r/4,3*\rv/2+\rva) .. (0l);
  \draw (-\r/2,3*\rva/2) node {$F_1$};
  \draw [line width=2.5pt, -to] (\rv/2-0.2,\rva-0.2) -- (3*\rv/4-0.2,\rva-0.2) ;
\end{scope}
\begin{scope}[shift={(3.4*\r,0)}]
%
  \coordinate (-3t) at (0,5*\rv/4+\rva); \fill (-3t) circle (3pt); \draw (-3t) node[shift=({0.25,-0.25})] {-$3'$};
  \coordinate (-1t) at (-\rb,3*\rv/2+\rva); \fill (-1t) circle (3pt); \draw (-1t) node[shift=({-0.4,-0.25})] {-$2$}; %
  \coordinate (-2t) at (\rb,3*\rv/2+\rva); \fill (-2t) circle (3pt); \draw (-2t) node[shift=({0,-0.35})] {-$1$};%
  \draw[thick] (-3t)--(-2t)--(-1t)--(-3t);
  \draw[ultra thin] (-1t) -- ++(\ger/3,-{sqrt(8)/3*\ger}) (-1t) -- ++(-\ger/3,-{sqrt(8)/3*\ger});
  \coordinate (0l) at (-\r,3*\rv/4+\rva); \fill (0l) circle (3pt); \draw (0l) node[shift=({-0.25,0.25})] {$0_1$};
  \coordinate (i) at (-\r/2,\rva/2); \fill (i) circle (3pt); \draw (i) node[right] {$i$};
  \coordinate (j) at (-\r/2,\rva); \fill (j) circle (3pt); \draw (j) node[below] {$j$};
  \draw[ultra thin] (j) -- (0l); 
  \foreach \x in {0,1}{
    \coordinate (-3c\x) at (0,\rv/4+\x*\rva); \fill (-3c\x) circle (3pt); \draw (-3c\x) node[shift=({0.25,-0.25})] {-3};
    \coordinate (-1c\x) at (\rb,\rv/2+\x*\rva); \fill (-1c\x) circle (3pt); 
    \coordinate (-2c\x) at (-\rb,\rv/2+\x*\rva); \fill (-2c\x) circle (3pt); 
    \coordinate (0c\x) at (0,3*\rv/4+\x*\rva); \fill (0c\x) circle (3pt); \draw (0c\x) node[shift=({0.25,0.25})] {0};
    \draw[thick] (-1c\x)--(-3c\x)--(-2c\x)--(0c\x)--(-1c\x)--(-2c\x);
};
  \draw[thick] (0c0) -- ++(0,\ger) (-3c1) -- ++(0,-\ger); \draw (0,\rv/2+\rva/2) node {$\vdots$};
  \draw[thick] (0c1) -- (-3t); 
  \draw[thick] (-1t) .. controls (-3*\r/4,3*\rv/2+\rva) .. (0l);
  \draw[thick, dashed](-2t) .. controls (-\r,7*\rv/4+\rva) .. (0l);
  \draw[thick, dashed](0l) .. controls (-\r,-\rv/4) .. (-3c0);
  \draw[ultra thin, dashed](0l) .. controls (-\r/2,-\rv/4) .. (i);
  \draw (-\r/2,3*\rva/2) node {$F_1$}; 
\end{scope}
\end{tikzpicture}
\caption{Constructing a necklace.}
\label{fig:bothin}
\end{figure}
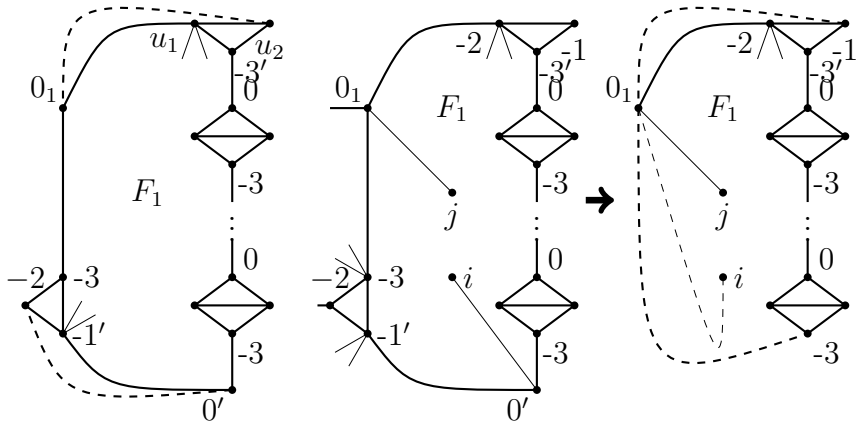

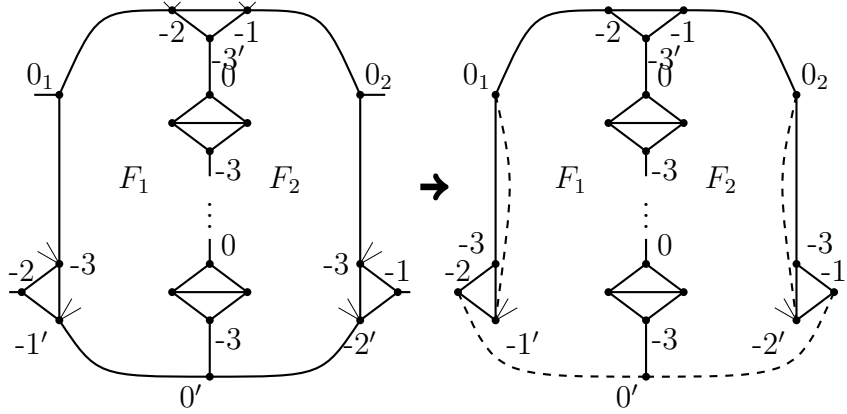
\begin{figure}[ht] 
\centering
\begin{tikzpicture}[scale=0.5]
\def \r {4}4.5
\def \rb {1}
\def \rv {3}
\def \rva {4.5}
\def \ger{2/3}
\begin{scope}
  \coordinate (0b) at (0,-\rv/4); \fill (0b) circle (3pt); \draw (0b) node[shift=({-0.25,-0.25})] {$0'$};
  \coordinate (-3t) at (0,5*\rv/4+\rva); \fill (-3t) circle (3pt); \draw (-3t) node[shift=({0.25,-0.25})] {-$3'$};
  \coordinate (-1t) at (-\rb,3*\rv/2+\rva); \fill (-1t) circle (3pt); \draw (-1t) node[below] {-$2$}; 
  \coordinate (-2t) at (\rb,3*\rv/2+\rva); \fill (-2t) circle (3pt); \draw (-2t) node[below] {-$1$};
  \draw[thick] (-3t)--(-2t)--(-1t)--(-3t);
  \draw[ultra thin] (-1t) -- ++(\ger/3,\ger/3) (-1t) -- ++(-\ger/3,\ger/3);
  \draw[ultra thin] (-2t) -- ++(\ger/3,\ger/3) (-2t) -- ++(-\ger/3,\ger/3);
  \coordinate (-1l) at (-\r,\rv/4); \fill (-1l) circle (3pt); \draw (-1l) node[below left] {-$1'$}; 
  \coordinate (-2l) at (-\r-\rb,\rv/2); \fill (-2l) circle (3pt); \draw (-2l) node[above] {-$2$}; 
  \coordinate (-3l) at (-\r,3*\rv/4); \fill (-3l) circle (3pt); \draw (-3l) node[right] {-$3$}; 
  \coordinate (0l) at (-\r,3*\rv/4+\rva); \fill (0l) circle (3pt); \draw (0l) node[shift=({-0.25,0.25})] {$0_1$};
  \draw[thick] (-3l)--(-2l)--(-1l)--(-3l)--(0l);
  \draw[thick] (0l) -- ++(-\ger,0) (-2l) -- ++(-\ger/2,0);
  \draw[ultra thin] (-1l) -- ++({sqrt(3)/2*\ger},\ger/2) (-1l) -- ++(\ger/2,{sqrt(3)/2*\ger});
  \draw[ultra thin] (-3l) -- ++({-sqrt(3)/2*\ger},\ger/2) (-3l) -- ++(-\ger/2,{sqrt(3)/2*\ger});
  \coordinate (-2r) at (\r,\rv/4); \fill (-2r) circle (3pt); \draw (-2r) node[below] {-$2'$}; 
  \coordinate (-1r) at (\r+\rb,\rv/2); \fill (-1r) circle (3pt); \draw (-1r) node[above] {-$1$}; 
  \coordinate (-3r) at (\r,3*\rv/4); \fill (-3r) circle (3pt); \draw (-3r) node[left] {-$3$}; 
  \coordinate (0r) at (\r,3*\rv/4+\rva); \fill (0r) circle (3pt); \draw (0r) node[shift=({0.25,0.25})] {$0_2$};
  \draw[thick] (-3r)--(-2r)--(-1r)--(-3r)--(0r);
  \draw[thick] (0r) -- ++(\ger,0) (-1r) -- ++(\ger/2,0);
  \draw[ultra thin] (-2r) -- ++({-sqrt(3)/2*\ger},\ger/2) (-2r) -- ++(-\ger/2,{sqrt(3)/2*\ger});
  \draw[ultra thin] (-3r) -- ++({sqrt(3)/2*\ger},\ger/2) (-3r) -- ++(\ger/2,{sqrt(3)/2*\ger});
  \foreach \x in {0,1}{
    \coordinate (-3c\x) at (0,\rv/4+\x*\rva); \fill (-3c\x) circle (3pt); \draw (-3c\x) node[shift=({0.25,-0.25})] {-3};
    \coordinate (-1c\x) at (\rb,\rv/2+\x*\rva); \fill (-1c\x) circle (3pt); 
    \coordinate (-2c\x) at (-\rb,\rv/2+\x*\rva); \fill (-2c\x) circle (3pt); 
    \coordinate (0c\x) at (0,3*\rv/4+\x*\rva); \fill (0c\x) circle (3pt); \draw (0c\x) node[shift=({0.25,0.25})] {0};
    \draw[thick] (-1c\x)--(-3c\x)--(-2c\x)--(0c\x)--(-1c\x)--(-2c\x);
};
  \draw[thick] (0c0) -- ++(0,\ger) (-3c1) -- ++(0,-\ger); \draw (0,\rv/2+\rva/2) node {$\vdots$};
  \draw[thick] (0b) -- (-3c0) (0c1) -- (-3t);
  \draw[thick] (0b) .. controls (-3*\r/4,-\rv/4) .. (-1l) (0b) .. controls (3*\r/4,-\rv/4) .. (-2r);
  \draw[thick] (-1t) .. controls (-3*\r/4,3*\rv/2+\rva) .. (0l) (-2t) .. controls (3*\r/4,3*\rv/2+\rva) .. (0r);
  \draw (-\r/2,\rva) node {$F_1$}; \draw (\r/2,\rva) node {$F_2$}; 
  \draw [line width=2.5pt, -to] (2*\rv-0.4,\rva-0.2) -- (9*\rv/4-0.4,\rva-0.2) ;
\end{scope}
\begin{scope}[shift={(2.9*\r,0)}]
  \coordinate (0b) at (0,-\rv/4); \fill (0b) circle (3pt); \draw (0b) node[shift=({-0.25,-0.25})] {$0'$};
  \coordinate (-3t) at (0,5*\rv/4+\rva); \fill (-3t) circle (3pt); \draw (-3t) node[shift=({0.25,-0.25})] {-$3'$};
  \coordinate (-1t) at (-\rb,3*\rv/2+\rva); \fill (-1t) circle (3pt); \draw (-1t) node[below] {-$2$}; 
  \coordinate (-2t) at (\rb,3*\rv/2+\rva); \fill (-2t) circle (3pt); \draw (-2t) node[below] {-$1$};
  \draw[thick] (-3t)--(-2t)--(-1t)--(-3t);
  \coordinate (-1l) at (-\r,\rv/4); \fill (-1l) circle (3pt); \draw (-1l) node[below right] {-$1'$}; 
  \coordinate (-2l) at (-\r-\rb,\rv/2); \fill (-2l) circle (3pt); \draw (-2l) node[above] {-$2$}; 
  \coordinate (-3l) at (-\r,3*\rv/4); \fill (-3l) circle (3pt); \draw (-3l) node[above left] {-$3$}; 
  \coordinate (0l) at (-\r,3*\rv/4+\rva); \fill (0l) circle (3pt); \draw (0l) node[shift=({-0.25,0.25})] {$0_1$};
  \draw[thick] (-3l)--(-2l)--(-1l)--(-3l)--(0l);
  \draw[ultra thin] (-1l) -- ++({sqrt(3)/2*\ger},\ger/2) (-1l) -- ++(\ger/2,{sqrt(3)/2*\ger});
  \coordinate (-2r) at (\r,\rv/4); \fill (-2r) circle (3pt); \draw (-2r) node[below left] {-$2'$}; 
  \coordinate (-1r) at (\r+\rb,\rv/2); \fill (-1r) circle (3pt); \draw (-1r) node[above] {-$1$}; 
  \coordinate (-3r) at (\r,3*\rv/4); \fill (-3r) circle (3pt); \draw (-3r) node[above right] {-$3$}; 
  \coordinate (0r) at (\r,3*\rv/4+\rva); \fill (0r) circle (3pt); \draw (0r) node[shift=({0.25,0.25})] {$0_2$};
  \draw[thick] (-3r)--(-2r)--(-1r)--(-3r)--(0r);
  \draw[ultra thin] (-2r) -- ++({-sqrt(3)/2*\ger},\ger/2) (-2r) -- ++(-\ger/2,{sqrt(3)/2*\ger});
  \foreach \x in {0,1}{
    \coordinate (-3c\x) at (0,\rv/4+\x*\rva); \fill (-3c\x) circle (3pt); \draw (-3c\x) node[shift=({0.25,-0.25})] {-3};
    \coordinate (-1c\x) at (\rb,\rv/2+\x*\rva); \fill (-1c\x) circle (3pt); 
    \coordinate (-2c\x) at (-\rb,\rv/2+\x*\rva); \fill (-2c\x) circle (3pt); 
    \coordinate (0c\x) at (0,3*\rv/4+\x*\rva); \fill (0c\x) circle (3pt); \draw (0c\x) node[shift=({0.25,0.25})] {0};
    \draw[thick] (-1c\x)--(-3c\x)--(-2c\x)--(0c\x)--(-1c\x)--(-2c\x);
};
  \draw[thick] (0c0) -- ++(0,\ger) (-3c1) -- ++(0,-\ger); \draw (0,\rv/2+\rva/2) node {$\vdots$};
  \draw[thick] (0b) -- (-3c0) (0c1) -- (-3t);
  \draw[thick, dashed] (0b) .. controls (-\r,-\rv/4) .. (-2l) (0b) .. controls (\r,-\rv/4) .. (-1r);
  \draw[thick, dashed] (0l) .. controls (-7*\r/8,\rva) .. (-1l) (0r) .. controls (7*\r/8,\rva) .. (-2r);
  \draw[thick] (-1t) .. controls (-3*\r/4,3*\rv/2+\rva) .. (0l) (-2t) .. controls (3*\r/4,3*\rv/2+\rva) .. (0r);
  \draw (-\r/2,\rva) node {$F_1$}; \draw (\r/2,\rva) node {$F_2$}; 
\end{scope}
\end{tikzpicture}
\caption{Constructing a semi-cover having two internal, non-triangular faces.}
\label{fig:inout}
\end{figure}

  We now proceed to the second case: $u_1=-1,\, u_2=-2$.

  First suppose that not all the paths $(-3,1,2)$ lying in the closure of $F_1 \cup F_2$ lie on the triangles $(-3,1,2)$. By Remark~\ref{rem:maxpath}, we must necessarily have $v_1=v_2=-3$, and moreover, at one of these vertices, the edge $(-3,1)$ goes in $F_1 \cup F_2$ and the edge $(-3,1)$ goes out, and at the other one, they go opposite way. Counting the edges $(-2,1)$ in $F_1$ and $(-1,2)$ in $F_2$ we obtain $a_1=a_2=b_2$. Counting the edges $(-3,1)$ in $F_1 \cup F_2$ we get $a_1+a_2=m+2$ which gives $b_1=a_1-2$. It follows that there are exactly $a_1$ vertices labelled $-1$ on the boundary of $F_1$, and so all four edges $(-1',2), (-1',3), (u_1,2)$ and $(u_1,3)$ go in $F_1$. Performing the surgery as on the left in Figure~\ref{fig:bothin} (with $u_1=-1,\, u_2=-2$) we obtain a necklace which is a contradiction with Corollary~\ref{c:nonecklace}\eqref{it:noneckl}.

  Now suppose that every path $(-3,1,2)$ lying in the closure of $F_1 \cup F_2$ lies on a triangle $(-3,1,2)$. By Remark~\ref{rem:maxpath} applied to $U=F_1$ and $i=2$, every path $(-2,1,3)$ lying in the closure of $F_1$ lies on a triangle $(-2,1,3)$, and similarly, every path $(-1,2,3)$ lying in the closure of $F_2$ lies on a triangle $(-1,2,3)$. It follows that every triangle $(1,2,3)$ lying in $F_i, i=1,2$, is a subgraph of a trapezium of type $i$, lying in the closure of $F_i$, with both vertices $i-3$ and $-3$ of the trapezium lying on the boundary of $F_i$. 

  We now use a simplified version of the argument in the last two paragraphs of the proof of Proposition~\ref{p:twofaces}. Consider the vertex $-3$ on the string lying immediately above the vertex $0'$ (in the notation of Figure~\ref{fig:sharem-2}). Without loss of generality we can assume that it is a vertex of a trapezium $T$ of type $2$ lying in (the closure of) the face $F_2$, with the vertices $-3$ and $-1$ lying on the boundary of $F_2$. From Lemma~\ref{l:support1}\eqref{it:trap-3top1} it follows that the vertex $-1$ of $T$ does not lie on the string, and so it can only be the vertex $v_2$. Then the path $(-3,1,-1,3)$ of $T$ lies above the paths $(-3,2,-1)$ , similar to how they are positioned in Figure~\ref{fig:trapat-3} (for otherwise we cannot connect both vertices $1$ and $3$ of $T$ to both $0'$ and $-2'$), with the edge $(2,0')$ forced. Then the vertex $1$ of $T$ cannot be connected to $0_2$, as we need an edge $(3,-2)$. Hence the vertex $1$ of $T$ is connected to a vertex $0$ lying on the string. Consider the vertex $v$ on the string lying on the boundary of $F_2$ immediately below that vertex $0$. Then $v$ is connected to a vertex labelled $3$ which is not a vertex of $T$. This vertex $3$ lies on a triangle $(1,2,3)$ which is supported on the string and whose top vertex of attachment to the string is either $0$ or $v \ne -3$ which contradicts Lemma~\ref{l:support1}\eqref{it:from0to-31}. This completes the proof in the case when both $F_1$ and $F_2$ are of length $3m$.

  Examining the above proof we can see that a contradiction is reached by considering only the part of $G'$ lying in the closure of $F_1 \cup F_2$. Hence in the case when one of the faces $F_i,\, i=1,2$, has length $3m$ and another one, $3(m-1)$, we can just add three vertices $0, i, j$ to the boundary of the shorter face $F_i$ (outside the string), assume that all the other edges of $G'$ incident to these vertices go out of $F_i$, and repeat the above argument. It should be noted however that one \emph{can} have $m-2$ beads on a boundary of a single face of length $3(m-1)$: it is not hard to construct an example of a $9$-face with two beads on its boundary and with a single triangle $(1,2,3)$ inside.
\end{proof}



\section{Proof of Theorem~\ref{th:nofold}}\label{sec:proofthm}

Recall that in the construction of the semi-cover $G'$ in Section~\ref{sec:pptH} we began with a cover $G$ of $K_{1,2, 2, 2}$ satisfying conditions~\eqref{it:minfold}, \eqref{it:max3}, \eqref{it:shortf} and \eqref{it:longf}. 
We then chose a long cycle $C$ covering an octahedral 3-cycle that contained no other such cycle in its interior.
The graph $G'$ and its subgraph $H$ were located inside the interior domain of $C$. 

Note that in fact $G$ must contain at least two such domains, one inside $C$ and one outside. (To see this we could re-embed $G$ with a face of $G'$ as the outer face).
The other may also be bounded by $C$, or may be bounded by another long cycle $C'$ covering an octahedral 3-cycle.
In any case, within each of these two domains there must be a planar semi-cover of $K_{1, 2, 2, 2}$ satisfying the conditions of Lemma~\ref{l:Hfaces}, and these two semi-covers are disjoint. 
In each semi-cover there is a subgraph $H$ covering a $K_4$ subgraph of $K_{1,2, 2, 2}$ that includes the vertex $0$.
Thus, if we show that $H$ has fold number at least $n$, this implies $G$ has fold number at least $2n$.

We can improve slightly on this bound by considering all long cycles in $G$ that cover an octahedral 3-cycle. We will refer to such cycles as \emph{long octahedral 3-cycles}. Firstly note that every vertex of $G$ not labelled $0$ lies on a long octahedral 3-cycle, otherwise it is surrounded by four short octahedral 3-cycles that induce a 4-cycle in its neighbourhood, and no neighbour labelled $0$ can be present. So if $G$ has fold number $n$ we can immediately see that the total length of all long octahedral 3-cycles is at least $6n$. In fact, we can say that it is strictly greater than $6n$ because inside a subcover $G'$ satsifying Lemma~\ref{l:Hfaces} there must be long octahedral 3-cycles present that intersect the surrounding long octahedral 3-cycle $C$ (otherwise $C$ is not minimal). So there are certainly some vertices that lie in two long octahedral 3-cycles.

In the other direction, we can bound the total length of long octachedral 3-cycles from above as follows.

\begin{lemma}\label{lem:longcycles}
Suppose $G$ has fold number $n$ and every possible $H$ satisfying Lemma~\ref{l:Hfaces} has fold number at least $h$ and contains at least $t$ triangles (with labels not appearing in $H$) inside its internal faces. Then the set of long octahedral $3$-cycles labelled $i, j, k$ or $-i,-j,-k$, where $\{|i|, |j|, |k|\} = \{1, 2, 3\}$, has total length at most $3(2n - 2h -2t)$. Thus the total length of all long octahedral $3$-cycles is at most $12(2n - 2h -2t)$    
\end{lemma}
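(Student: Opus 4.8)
The plan is to turn the length bound into a count of triangular faces covering octahedral $3$-cycles. For any octahedral type $T=(i,j,k)$ with $\{|i|,|j|,|k|\}=\{1,2,3\}$, the lift of the triangle $T$ is a disjoint union of cycles whose vertices are exactly the $3n$ vertices of $G$ carrying one of the three labels of $T$, each used once; hence the total length of all cycles covering $T$ equals $3n$. By condition~\eqref{it:shortf} every short such cycle bounds a triangular face of $G$, so, writing $p_T$ for their number, the long cycles covering $T$ have total length $3n-3p_T$. Adding the identities for $T$ and $-T$ (whose vertex sets are disjoint), the long octahedral $3$-cycles of the antipodal pair $P=\{T,-T\}$ have total length $6n-3(p_T+p_{-T})$. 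Thus the claimed bound $3(2n-2h-2t)$ is equivalent to $p_T+p_{-T}\ge 2h+2t$, and summing this over the four antipodal pairs gives the total bound $12(2n-2h-2t)$. So it suffices to produce, for each pair, at least $2h+2t$ triangular faces of $G$ of types $T$ and $-T$.

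Next I would read these faces off from a semi-cover. Consider a semi-cover $G'$ whose $H$ covers $\{0,-1,-2,-3\}$, the complementary $K_4$ of $T=(1,2,3)$. Inside $H$ the lift of $(-1,-2,-3)$ is, by Lemma~\ref{l:Hfaces}\eqref{it:s9inn3} and \eqref{it:Htri}, a union of triangles, and their number equals the fold of $H$, hence is at least $h$; by \eqref{it:shortf} each is a face of $G$ of type $-T$. By hypothesis there are at least $t$ triangles of type $(1,2,3)=T$ among the internal faces of $H$, again faces of $G$ by \eqref{it:shortf} (and \eqref{it:s9tri}). All of these triangles are distinct and lie in the closed region bounded by the long octahedral cycle that encloses $G'$, so a single such semi-cover already yields at least $h$ faces of type $-T$ and at least $t$ faces of type $T$.

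Then I would invoke the two-domain observation preceding the lemma: a long octahedral cycle $C$ covering $T$ bounds two disjoint regions of the sphere, each containing a semi-cover satisfying Lemma~\ref{l:Hfaces}. I would argue that both of these semi-covers may be taken to cover $\{0,-1,-2,-3\}$, because immediately on either side of $C$ the only labels occurring as neighbours of the vertices of $C$ are $0,-1,-2,-3$, so the construction of Section~\ref{sec:pptH} carried out on each side produces an $H$ covering this same $K_4$. As the two semi-covers are disjoint the counts of the previous paragraph add, giving $p_{-T}\ge 2h$ and $p_T\ge 2t$, whence $p_T+p_{-T}\ge 2h+2t$. Running the identical argument starting from an innermost long octahedral cycle of each type (equivalently, relabelling $K_{1,2,2,2}$) would establish the inequality for all four pairs; a pair admitting no long octahedral cycle is immediate, since then $p_T+p_{-T}=2n$ and the disjoint semi-covers force $n\ge 2h$ and $n\ge 2t$, hence $2n\ge 2h+2t$.

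The crux I expect is the claim that both regions cut out by $C$ house a semi-cover covering the \emph{same} $K_4$ $\{0,-1,-2,-3\}$, i.e. that both feed the same pair $P$. The danger is that the far side of $C$ has its own innermost long octahedral cycle $C'$ of a different type, so that the semi-cover the construction most naturally builds there covers a different $K_4$ and contributes to a different pair. Overcoming this requires showing that inside each region one may select the component $H$ of the lift of $K_4$ on $\{0,-1,-2,-3\}$ that is innermost relative to $C$ and then verify that it still satisfies every clause of Lemma~\ref{l:Hfaces}, in particular the innermost-ness of the surrounding octahedral cycle underlying \eqref{it:Htri} and \eqref{it:s9inn3}. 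One must also check that long octahedral cycles of other types threading the vertices of $C$ are charged only to their own type in the length identity of the first paragraph, which holds because that identity is computed type by type. The remaining bookkeeping, namely distinctness of the counted faces and the elementary bounds used for the degenerate pairs, I expect to be routine.
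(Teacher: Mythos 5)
Your reduction of the length bound to the inequality $p_T+p_{-T}\ge 2h+2t$ is correct, and so is the extraction of at least $h$ triangles of type $-T$ and $t$ of type $T$ from a single semi-cover whose $H$ covers $\{0,-1,-2,-3\}$. The gap is exactly where you located it, and your sketch does not repair it: you need the two disjoint regions to contain semi-covers satisfying Lemma~\ref{l:Hfaces} whose $H$ covers the \emph{same} $K_4$ on $\{0,-1,-2,-3\}$, and this is neither established nor to be expected. The construction of Section~\ref{sec:pptH} applied to the far region starts from an innermost long octahedral cycle of \emph{that} region, which may cover a triangle of a different type, say $(1,2,-3)$; the resulting $H$ then covers $\{0,-1,-2,3\}$ and all the triangles it supplies have types $(1,2,-3)$ and $(-1,-2,3)$, contributing nothing to $p_T+p_{-T}$. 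Your proposed fix, taking the component of the lift of the $\{0,-1,-2,-3\}$-$K_4$ innermost relative to $C$, also fails: clauses \eqref{it:Htri} and \eqref{it:s9inn3} of Lemma~\ref{l:Hfaces} hold for the paper's $H$ only because the surrounding octahedral cycle is innermost among \emph{all} long octahedral cycles, whereas in the far region a long cycle covering $(1,2,3)$ or $(-1,-2,-3)$ may thread the internal faces of your chosen component; such a component then does not satisfy the hypotheses of Lemma~\ref{l:Hfaces}, so the assumed lower bounds $h$ and $t$ simply do not apply to it. (Your fallback for pairs with no long cycle has the same flaw: $n\ge 2h$ and $n\ge 2t$ presuppose the shared-$K_4$ claim, although the weaker and sufficient $2n\ge 2h+2t$ does survive.)

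The paper's proof avoids the issue entirely by never asking the two subcovers to share a $K_4$; this symmetrization is the idea your proposal is missing. It counts vertices labelled $1$ or $-1$: whichever $K_4$ a subcover's $H$ covers, the subcover contains at least $x$ vertices labelled $1$ and at least $h+t-x$ vertices labelled $-1$ for some $x\in\{h,t\}$ (either the label lies in the $K_4$, giving at least $h$ copies from $H$, or it is a label of the interior triangles, giving at least $t$ copies), hence at least $h+t$ vertices labelled $\pm1$ per subcover and at least $2h+2t$ in the two together. The long cycles of types $T$ and $-T$ are pairwise disjoint, so the two subcovers can be taken inside domains of this family meeting no such long cycle; since every vertex labelled $\pm1$ lies on exactly one cycle of the lift of $T$ or of $-T$, each of those $2h+2t$ vertices must lie on a \emph{short} one. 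Thus at most $2n-2h-2t$ vertices labelled $\pm1$ are available to the long cycles of the pair, which is precisely the bound $3(2n-2h-2t)$ on their total length; in your formulation, this same count yields $p_T+p_{-T}\ge 2h+2t$ without ever deciding which $K_4$ each subcover covers.
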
 

\begin{proof}
The set of long octahedral $3$-cycles labelled either $i, j, k$ or $-i,-j,-k$ (which are pairwise disjoint) bounds at least two domains that contain a subcover satisfying Lemma~\ref{l:Hfaces}.

The label $1$ appears at least $x+y$ times in these subcovers, for $x,y \in \{h,t\}$, and the label $-1$ appears at least $(h+t -x) + (h+t-y$) times. So in total they appear at least $2h + 2t$ times. Thus the remaining vertices labelled 1 or $-1$ that are not in these two subcovers number at most $2n - 2h - 2t$. Hence the total length of long cycles labelled $i, j, k$ or $-i,-j,-k$ is at most $3(2n - 2h - 2t)$. Since there are four pairs of disjoint faces in the octahedron, the final claim follows.
\end{proof}

To complete the proof of Theorem~\ref{th:nofold}, we will show that we can take the values $h=6$ and $t=3$ in Lemma~\ref{lem:longcycles}.
As observed above, this would immediately exclude the possibility that $G$ has fold number less than 12.
Now suppose $G$ is a 12-fold cover. Then it has 72 octahedral vertices, and long octahedral 3-cycles with total length at most $12(24-12-6)=72$, contradicting the observation that some vertex is in two such cycles.

When $H$ is an $h$ fold cover of $K_4$ with $3m$ vertices on the outer face, we have $3h - 2m$ internal octahedral vertices. Each has two neighbours in a triangle labelled $(1,2,3)$, and each vertex in these triangles has two octahedral neighbours in $H$. So the number of such triangles is at least $ \lceil \frac{2(3h-2m)}{2 \cdot 3} \rceil = \lceil h - \frac{2m}{3} \rceil$. When $h=6$, this is at least 3 unless $m=6$, in which case $H$ is a necklace, contradicting Corollary~\ref{c:nonecklace}\eqref{it:noneckl}. Thus we may take $t=3$ if $h=6$ in Lemma~\ref{lem:longcycles}.

It remains to show that there can be no $H$ subgraph satisfying Lemma~\ref{l:Hfaces} that has fold number $h\leq 5$.

By contracting all the triangles $(-1, -2, -3)$ in $H$ to single vertices and replacing strings of beads in $H$ with edges, we obtain a graph $H''$ which is connected, $3$-regular, bipartite and planar. Note that $H''$ may have double edges. 
If $a$ is the number of $0$ vertices in $H''$ then there are $2a$ vertices, $3a$ edges and $a+2$ faces. 
 It follows from Corollary~\ref{c:nonecklace}\eqref{it:noneckl} and Proposition~\ref{p:twofaces}  that $H$ has at least three internal, non-triangular faces. This means that $H''$ cannot be the graph with two vertices and three parallel edges, i.e. $a\geq 2$. 

Let $f_i$ be the number of $i$-faces in $H''$. Then the number of edges is $3a = \frac12\sum_i if_i$ and by Euler's formula we have $6=6a - 9a + 3\sum_i f_i = 3\sum_i f_i - \frac12\sum_i if_i$. This implies that 
\begin{equation}\label{eq:beads}
    2f_2 + f_4 = 6 + f_8 + 2f_{10} + 3f_{12} +\ldots
\end{equation}

Let $b$ be the number of beads in $H$, so the fold number is $h = a +b$.
We know by Lemma~\ref{l:Hfaces}\eqref{it:no6face} that every internal, non-triangular face of $H$ has length at least $9$. Thus any internal 2-face of $H''$ must have at least 2 beads on its boundary, and any internal 4-face of $H''$ needs one bead on its boundary. If the outer face is a 2-face it requires at least one bead, otherwise it corresponds to a non-facial 3-cycle in $G$, contradicting condition~\eqref{it:shortf}.
Since a single bead lies on only two faces, these requirements allow us to estimate the number of beads by double counting.
Moreover, it follows from Proposition~\ref{p:share} that two internal $2$- or $4$-faces of $H''$ cannot share all their beads.

If $f_2=0$ then $f_4\geq 6$ by~\eqref{eq:beads} and thus $a\geq 4$ (since the number of faces is $a+2$). There must be at least 5 internal 4-faces, and thus at least $5/2$ beads, so $h = a + b \geq 7$.

If $f_2 = 1$ then $f_4\geq 4$ by~\eqref{eq:beads} and $a\geq 3$. If the 2-face is internal then there are at least 3 internal 4-faces, giving $b\geq 5/2$. If the 2-face is external, then there are at least 4 internal 4-faces and all faces require at least one bead, giving $b\geq 5/2$. Thus $h\geq 6$. 

If $f_2\geq 3$ or $f_2=2$ and the outer face is not a 2-face, then we have two internal 2-faces and thus at least 4 beads because 2-faces cannot share an edge, so $h \geq 6$.

Finally, suppose $f_2=2$ and the outer face is a 2-face. Then we have at least 2 beads on the internal 2-face and another on the outer 2-face. If $a=2$ then $f_4=2$ and $H''$ is as shown in Figure~\ref{fig:3internalfaces}. It can be seen that in this case 4 beads are required to satisfy the requirement (Proposition~\ref{p:share}) that two internal faces do not share all their beads.

Thus in every case we have $h\geq 6$, completing the proof of Theorem~\ref{th:nofold}.

\begin{figure}[h]
    \centering
\begin{tikzpicture}[scale=0.8,rotate=90]
  \draw[thick] (-6, 0.5) .. controls (-6.7,1.5) .. (-6,2.9); 
  \draw[thick] (-6, 0.5) .. controls (-5.3,1.5) .. (-6,2.9);
 \draw[ thick ] (-6, 3.1)--(-6,4); 
  \draw[thick] (-6, 4) .. controls (-4,4) and (-4,-0.5) .. (-5.9,-0.55); 
    \draw[thick] (-6, 4) .. controls (-8,4) and (-8, -0.5) .. (-6.1,-0.55); 
 \draw[ thick ] (-6, 0.5)--(-6, -0.4);
 \draw[thick, fill=white] (-6, 3) circle (4pt) ; 
\filldraw[black] (-6, 4) circle (4pt) ;
\draw[thick, fill=white] (-6, -0.5) circle (4pt) ; 
\filldraw[black] (-6, 0.5) circle (4pt) ;
\end{tikzpicture}
    \caption{$H''$ that requires $4$ beads.}
    \label{fig:3internalfaces}
\end{figure}
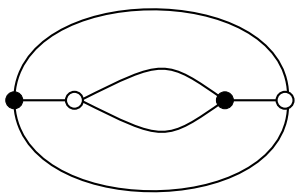

\end{document}